  \definecolor{purple}{rgb}{0.5,0,1}
  \definecolor{orange}{cmyk}{0,0.7,1,0}
  \definecolor{midgrey}{gray}{0.5}
\newtheorem{theorem}{Theorem}[section]
\newtheorem{corollary}{Corollary}
\newtheorem{lemma}[theorem]{Lemma}
\newtheorem{proposition}{Proposition}
\theoremstyle{definition}
\newtheorem{remark}{Remark}
\newcommand{\la}{\lambda}
\newcommand{\mc}[1]{\mathcal{#1}}
\newcommand{\comment}[1]{}
\newcommand{\md}{\mathrm{d}}
\newcommand{\sem}[1]{\mbox{$({#1}(t))_{t \geq 0}$}}
\newcommand{\mbb}[1]{\mathbb{#1}}
\newcommand{\e}{\epsilon}
\newcommand {\ti}[1]{\widetilde {#1}}
\newcommand{\p}{\partial}
\newcommand{\cl}[2]{\int\limits_{#1}^{#2}}
\newcommand{\mr}[1]{\mathring{#1}}
\newcommand{\esssup}{\textrm{ess}\!\!\!\!\!\!\sup}
\title[Growth-fragmentation equations ] 
      {Growth-fragmentation equations with McKendrick--von Foerster boundary condition}
\author[J. Banasiak, D.W. Poka and S.K. Shindin]{}
\subjclass{Primary: 47D06, 47G20,47A10; Secondary:  92D25, 47B65, 47A55}
 \keywords{Growth--fragmentation equation, McKendrick--von Foerster boundary conditions, population theory, strongly continuous semigroups, spectral gap, asynchronous exponential growth, irreducible semigroups, explicit solutions.}
 \email{jacek.banasiak@up.ac.za}
 \email{wetsi@aims.ac.za}
 \email{shindins@ukzn.ac.za}
\thanks{The research has been partially supported by the National
Science Centre of Poland Grant 2017/25/B/ST1/00051 and the National Research Foundation of South Africa Grant 82770}
\begin{document}
\maketitle

\centerline{\scshape Jacek Banasiak}
\medskip
{\footnotesize
 \centerline{Department of Mathematics and Applied Mathematics}
   \centerline{University of Pretoria}
   \centerline{ Pretoria, South Africa}
\centerline{\&}
\centerline{Institute of Mathematics}
\centerline{Technical University of \L\'{o}d\'{z}}
\centerline{\L\'{o}d\'{z}, Poland} 
\medskip
\centerline{\scshape David Wetsi Poka}
\medskip
{\footnotesize
 \centerline{Department of Mathematics and Applied Mathematics}
   \centerline{University of Pretoria}
   \centerline{ Pretoria, South Africa}
}\medskip
\centerline{\scshape Sergey Shindin}
\medskip
{\footnotesize
 \centerline{School of Mathematics, Statistics and Computer Science}
   \centerline{University of KwaZulu-Natal}
   \centerline{Durban, South Africa}
}
\bigskip
\centerline{\small The paper is dedicated to Jerry Goldstein on the occasion of his 80th birthday}
\bigskip
 \centerline{(Communicated by the associate editor name)}

\begin{abstract}
The paper concerns the well-posedness and long-term asymptotics of growth--fragmentation equation with unbounded fragmentation rates and McKendrick--von Foerster boundary conditions. We provide three different methods of proving that there is a strongly continuous semigroup solution to the problem and show that it is a compact perturbation of the corresponding semigroup with a homogeneous boundary condition. This allows for transferring the results on the spectral gap available for the later semigroup to the one considered in the paper. We also provide sufficient and necessary conditions for the irreducibility of the semigroup needed to prove that it has asynchronous exponential growth. We conclude the paper by deriving an explicit solution to a special class of growth--fragmentation problems with McKendrick--von Foerster boundary conditions and by finding its Perron eigenpair that determines its long-term behaviour.
\end{abstract}

\section{Introduction}
We consider the continuous fragmentation equation with growth
\begin{subequations}\label{Tsapele}
\begin{equation}
\partial _t u(x,t) =-\partial _x (r(x)u(x,t)) -a(x)u(x,t) + \int _x^{\infty}a(y)b(x,y)u(y,t)dy,\quad \label{eq1}
\end{equation}
complemented by the initial condition
\begin{equation}
u(x,0)=\mr u(x),\label{eqic}
\end{equation}
and the McKendrick--von Foerster boundary conditions
\begin{equation}\label{Beta}
\lim_{x\to0^+}r(x)u(x,t)=\int _0^{\infty}\beta (y)u(y,t)dy,
\end{equation}
\end{subequations}
for $t,x\in \mbb R_+=(0,\infty)$. Here, $u(x,t)$ is the density of particles of mass/size $x\in (0,\infty)$ at time $t$, $a$ is the fragmentation rate,
 $b$ is the fragmentation kernel that describes mass distribution of $x-$size particles spawned by the fragmentation of a size $y$ particle,
$r$ is the transport coefficient that describes the rate of growth of particles of size $x,$ and $\beta $ represents the rate at which the smallest daughter particles enter the population.  We observe that if $\beta>0$, then the boundary condition depends on the solution on the entire interval $\mathbb{R}_+$, making the problem nonlocal.

The main aim of this paper is to establish the existence of a positive strongly continuous semigroup \sem{S_{K_{\beta,m}}} solving \eqref{Tsapele}  in
\[
X_{m}:=L_1(\mathbb{R_+},(1+x^m)dx),\quad m > 1,
\]
 with the norm $\|\cdot\|_m$, and to show that under natural assumptions it has an the asynchronous exponential growth (AEG) property, that is, that there exist positive constants $\varepsilon$ and $c$ such that
\begin{equation}\label{eqasyexpgr0}
\|e^{-s(K_{\beta,m}) t} S_{K_{\beta,m}}(t)f - \mathcal{P}f \|_m \le c e^{-\varepsilon t} \|f\|_m,\quad t\ge 0,\, f\in X_m,
\end{equation}
where $\mc P$ is the projection on the Perron eigenvector of the generator of  \sem{S_{K_{\beta,m}}}. This property is related to the existence of the so-called spectral gap, that is, the existence of the isolated dominant eigenvalue of \sem{S_{K_{\beta,m}}}.

There is a large body of literature dealing with property \eqref{eqasyexpgr0} for solutions of \eqref{Tsapele} in various settings and using different approaches.
The case when $x$ belongs to a bounded interval has been well-understood since the work of Diekmann, Heijmans and  Thieme, \cite{Diek},   though unbounded rates can be tricky, see \cite{BPR}. A comprehensive theory of the classical McKendrick--von Foerster model of population theory (that is, without the fragmentation operator) can be found in \cite{Webb,ian, Klaus1}. All known work on the full growth--fragmentation problem in unbounded state spaces has been done for \eqref{Tsapele} with homogeneous boundary conditions ($\beta = 0$). In particular, many results have been obtained by the General Relative Entropy method, introduced in \cite{MMP}, which in special cases can also give the exponential rate of decay, \cite{PR}.
 Some results have been established by probabilistic methods, see e.g., \cite{BTK, BertW}. In this work, we focus on operator--theoretic methods the foundation of which can be traced to \cite{Sch, BG1, BG2}. The  latter two papers are based on the study of the Perron eigenvector and eigenvalue done in \cite{DJG}. Quantitative estimates of the spectral gap were obtained by means of Harris operators in \cite{CGY}. Much of the analysis of the above papers was hampered by the lack of satisfactory solvability results for \eqref{Tsapele} with unbounded rates. Such a theory in $X_m$ with $m>1$ was established in \cite{Lamb2020} and it allowed for building a comprehensive spectral gap theory for \eqref{Tsapele} with $\beta =0$ in \cite{MKJB},  paving also the way for extending it to the general case, presented in this paper. It is also worthwhile to observe that a parallel theory in the spirit of both \cite{MKJB} and this paper, but for a discrete model, was obtained in \cite{BJSJEE}.

The paper is organized as follows. In Section \ref{secnt} we introduce basic notation, definitions and assumptions used in the paper. As we intend to give a comprehensive panorama of possible approaches to the growth--fragmentation equation with McKendrick--von Foerster boundary conditions, in Section \ref{secMKJB} we present the main steps for establishing  AEG for \eqref{Tsapele} with $\beta =0$ from \cite{MKJB}. The motivation for this review is that, in principle, having proven the existence of \sem{S_{K_{\beta,m}}}, one could redo all calculations of \textit{op. cit.} for $\beta\neq 0$ to arrive at \eqref{eqasyexpgr0} also in this case. We present, however, a shorter way. In Section \ref{secGT} we give three different theorems on the generation of the semigroup \sem{S_{K_{\beta,m}}}. They extend the results of \cite{BL09}, where we considered \eqref{Tsapele} with $\beta\neq 0$ but only in $X_1$, and of \cite{Lamb2020}, where we discussed the problem in arbitrary $X_m,$ but with $\beta=0$. The third theorem, inspired by \cite[Proposition 4.3, Chapter VI]{Klaus1}, shows that, under suitable assumptions, \sem{S_{K_{\beta,m}}} is a compact perturbation of \sem{S_{K_{0,m}}}, allowing thus the direct application of the results of \cite{MKJB} to establish the existence of a spectral gap for the former from that of the later.  In Section \ref{secsg} we present necessary and sufficient conditions for the irreducibility of \sem{S_{K_{\beta,m}}}. It is a completely new result, significantly extending the ones in \cite{MKJB}
or \cite[Theorem 5.2.21]{JWL}. Finally, in Section \ref{seces} we apply the recent methods of explicitly solving the growth--fragmentation equations, developed in \cite{Banasiak2022}, to construct a solution to a class of models with McKendrick--von Foerster boundary conditions, and to find their Perron eigenpair.

\section{Notation and assumptions}\label{secnt}
We re-write \eqref{Tsapele}  in a compact form as
\begin{equation}\label{Tacp}
\partial _t u(x,t)=-\mc T u(x,t) +\mc Fu(x,t), \qquad (x,t)\in \mathbb{R}^2_+.
\end{equation}
The transport and the fragmentation expressions are defined, respectively,  as
\begin{equation}\label{ops}
\begin{aligned}
\mc Tu(x)&=\p_x\big(r(x)u(x)\big),\\
\mc Fu(x)&=\mc Au(x)+\mc Bu(x)=-a(x)u(x)+\int _x^{\infty}a(y)b(x,y)(y)dy,\\
\end{aligned}
\end{equation}
where the partial derivative is understood in the distributional sense and $u$ is a locally integrable function
for which the integral operator is finite in $\mathbb{R}_+$.  As we mentioned above, the problem shall be analyzed in the space $
X_{m}$. The symbols $X_{m,+}$ and $X^{*}_{m}$ are reserved for
the positive cone (generated by the usual a.e. partial order for measurable functions) and the normed dual of $X_m$,
respectively. If we want to keep the duality pairing between $X_m$ and $X_m^*$ as
\begin{equation}\label{duall}
\langle g,f\rangle = \int _0^{\infty} g(x)f(x)dx,
\end{equation}
then $X_m^*$ can be identified with the space of measurable functions $g$ satisfying
\begin{equation}\label{infy}
\parallel g \parallel^*_{m}=\esssup _{x\in(0,\infty)} \frac{|g(x)|}{1+x^m}< \infty.
\end{equation}

In the presence of the transport term involving partial derivative with respect to the state variable and of
the nonlocal boundary condition, solvability of \eqref{Tsapele} is not a straightforward procedure and there are case when the solving semigroup does not exist, e.g., \cite[p. 9]{MKJB}. For our analysis, we shall adopt assumption on the model coefficients from \cite{JA,JWL} with slight modifications.

The transport coefficient $r$ is assumed to be a continuous function on  $\mbb R_+$, satisfying
\begin{equation}\label{RRe}
0< r(x) \le {r_0}(1+x),\quad x\in \mbb R_+,
\end{equation}
for some ${r_0}>0$. Furthermore, the boundary condition \eqref{Beta} makes sense if and only if
the characteristic curves, associated to the transport part of the model, cross the $t$-axis
transversally. Accordingly, we shall assume that $r(x)$ is bounded and is separated away from
zero as $x\to 0^+$. This implies, in particular, that
\begin{equation}\label{g1}
\int _{0^+}\frac{ds}{r(s)}< \infty,
\end{equation}
where $\int _{0^+} $ denotes the integral in some positive neighborhood of $0$.

The fragmentation rate $a$ is assumed to satisfy
\begin{equation}\label{aa}
0\leq  a \in L_{\infty, loc}([0,\infty)).
\end{equation}
Some stronger results can be obtained if $a$ is polynomially bounded, i.e., if for some $p> 0$,
\begin{equation}\label{aa1}
0\leq a(x)\leq a_0(1+x^p), \quad x \in \mbb R_+.
\end{equation}
The case $p=0$ results in a bounded fragmentation operator and is not considered here.
\begin{remark}
The adopted assumptions imply that $1/r,a/r \in L_{1, loc}([0,\infty)).$ Hence, we can define the following functions
\begin{equation}\label{inte}
R(x):=\int ^x _{0}\frac{ds}{r(s)},\quad Q(x):=\int ^x _{0} \frac{a(s)}{r(s)}ds.
\end{equation}
An immediate consequence of \eqref{inte} is that $R$ is strictly increasing and $Q$ is non-decreasing on
$(0,\infty).$ We have the following limits,
\begin{equation}\label{inteLim}
\begin{aligned}
\lim _{x\to 0^+}R(x)=0,&\quad\lim _{x\to \infty}R(x)=\infty,\\
\lim _{x\to 0^+}Q(x)=0,&\quad\lim _{x\to \infty}Q(x)=M_Q.
\end{aligned}
\end{equation}
Typically, $M_Q=\infty$ provided $a$ is unbounded at $\infty$.
\end{remark}

The fragmentation kernel $b,$ describing the distribution of the sizes $x$ of daughter particles spawned by
fragmentation of a parent particle of size $y$, is assumed to be a non-negative measurable function of two
variables satisfying
\begin{equation}\label{bb}
b(x,y)=0 \;\; \mbox{for }\;x>y.
\end{equation}
\begin{remark}\label{rem10}
If we allow the fragmentation rate $a$ to be zero on some set $I\subset \mbb R_+$, that is, if we admit the situation
that clusters of sizes $y\in I$ do not split, then, to be consistent with physics, we should set $b(x,y) = \delta(y-x)$.
To avoid dealing with measures in the equations, we note that the coefficient $b$ appears only in a product with $a$
and hence $a(y)b(x,y) = 0$ for $y\in I.$ However, the formulae below hold if we treat $b(x,y)dx$ as the Dirac measure.
\end{remark}

For $m\geq 0,$ we define
\begin{equation}\label{nm}
n_m(y)=\int ^y_0x^mb(x,y)dx.
\end{equation}
In particular, $n_0(y)$ is the mean number of daughter particles resulting from splitting of a $y-$aggregate and
$n_1(y)$ is their total mass. Hence, if we consider mass conserving fragmentation, we must assume that for any $y>0$
the identity
\begin{equation}\label{nm1}
n_1(y)=y =\int ^y_0xb(x,y)dx
\end{equation}
is satisfied. It is also physically obvious (and can be proved, see \cite[Section 2.2.3.2]{JWL}) that $n_0(y)> 1$
for $y\notin I$. We further assume that there is $l\geq 0$ and $b_0\in \mathbb{R}_+$, such that
\begin{equation}\label{n0}
n_0(x)\le b_0(1+x^l), \quad x\in \mbb R_+.
\end{equation}
Clearly, $b_0\geq 1$.
Using \eqref{nm1}, we find, for $m\neq 0,1$,
\begin{equation}\label{nm3}
\begin{split}
n_m(y) &= \int ^y_0x^mb(x,y)dx \\
&=\int ^y_0x^{m-1}xb(x,y)dx \left\{\begin{array}{llll}\!\! \leq \!\!\!\!&y^{m-1}\int ^y_0xb(x,y)dx=y^m&\!\!\text{if}\!\!& m>1,\vspace{2mm}\\
\!\! \geq \!\!\!\!&y^{m-1}\int ^y_0xb(x,y)dx=y^m&\!\!\text{if}\!\!& m<1.\end{array}\right.
\end{split}
\end{equation}
We further introduce
\begin{equation}\label{Nm}
N_m(y)=y^m-n_m(y).
\end{equation}
Then, based on \eqref{nm3}, we obtain
\begin{equation}\label{NM}
\begin{aligned}
N_m(y)&\geq 0,\quad m>1,\\
N_1(y)&=0, \quad m=1,\\
N_m(y)&\leq 0,\quad 0\le m <1.
\end{aligned}
\end{equation}
The crucial assumption that allows for the proof of the generation theorem is that there exists $m_0>1$ such that
\begin{equation}\label{lim}
\lim _{y\to \infty}\!\!\!\inf \frac{N_{m_0}(y)}{y^{m_0}}>0.
\end{equation}
It follows,  \cite[Section 5.1.7.2]{JWL} or \cite{BJSJEE},   that if \eqref{lim} holds for some $m_0>1$,
then it holds for all $m>1$. Furthermore, \eqref{lim} implies that for any $m>1$ there is a $c_m<1$
and $y_m>0$ such that
\begin{equation}\label{cm}
n_m(y)\le c_my^m, \quad y\geq y_m.
\end{equation}
Assumption \eqref{lim} is satisfied for most commonly used fragmentation kernels such as the power law, homogeneous or separable kernels. However, if the sizes of the daughter particles are distributed close to that of the fragmenting parent and, correspondingly, close to zero, then such a process may not satisfy \eqref{lim}, see  \cite[Section 5.1.7.2]{JWL}.

Finally, we assume that
\begin{equation}\label{beeta}
0\leq \beta \in X_{m}^{*},
\end{equation}
with $\|\beta\|^*_m = \beta_m$.

It should be noted that for $\beta=0$, we get the homogeneous boundary
condition which was considered in \cite[Section 5.2]{JWL} and \cite{Lamb2020, MKJB}. As the main results on the spectral gap are based on the theory developed in \cite{MKJB}, in the following section we briefly recount its main points.

\section{Spectral gap theory of \cite{MKJB}}\label{secMKJB}
Let $Z_{0,m}$ and $K_{0,m}$ be the operator realizations of, respectively, $-\mc T+\mc A$ and $-\mc T+\mc A+\mc B$, subject to \eqref{Beta} with $\beta =0$, generating strongly continuous semigroups \sem{S_{Z_{0,m}}} and \sem{S_{K_{0,m}}} in $X_m$. The first crucial results of \cite{MKJB} are the resolvent estimates, namely, for sufficiently large $\la$ (see \eqref{res0a}), the resolvent $R(\la, Z_{0,m})$ satisfies for $f\in X_m$
  \begin{equation}\label{pest}
\left|[ R(\lambda,Z_{0,m})f](y)\right | \leq \frac{1}{(1+y^m)r(y)}%
\left\Vert f\right\Vert _{m}, \quad y\in \mbb R_+,
\end{equation}%
and
\begin{equation}\label{alarge}
\int_{0}^{+\infty }\left| [R(\lambda,Z_{0,m})f](y)\right| a(y)(1+y^{m})dy\leq \|f\|_{m }.
\end{equation}
If $y\mapsto  \frac{1}{(1+y^m)r(y)} \in X_m$, then \eqref{pest} would mean that $R(\la, Z_{0,m})$ maps the unit ball in $X_m$ into an order interval and, since $X_m$ is an $L_1$ space, $R(\la, Z_{0,m})$ would be a weakly compact operator. Since, however, it does not hold, we only get that the set
 \begin{equation*}
\left\{ \chi_{\left( 0,\varepsilon ^{-1}\right) }R(\lambda,Z_{0,m})f;\
\left\Vert f\right\Vert_m\leq 1\right\},
\end{equation*}
 where $\chi$ is the characteristic function, is weakly compact for any $\e>0$. If now $a$ was uniformly separated from 0 for large $x$, then \eqref{alarge} would give the smallness of $\chi_{\left(\varepsilon ^{-1},\infty \right) }R(\lambda,Z_{0,m})f$ as $\varepsilon \to 0$, uniformly for  $\left\Vert f\right\Vert _m\leq 1,$ and hence, together with the previous observation, the weak compactness of $R(\la, Z_{0,m})$. Otherwise, we assume that the sublevel sets of $a$ are `thin at infinity' in the sense that for any $c>0$,
 \begin{equation}
\int_{1}^{+\infty }\frac{\chi_{\{\tau;\;a<c\}}(\tau )}{r(\tau )}d\tau <+\infty.  \label{Tss}
\end{equation}%
We observe that if
\begin{equation*} \lim_{y\rightarrow +\infty
}a(y)=+\infty,
\end{equation*}
then \eqref{Tss}
is trivially satisfied. This allows for splitting the estimates of $\chi_{\left(\varepsilon ^{-1},\infty \right) }R(\lambda,Z_{0,m})f$ into the parts where $a$ is, respectively, small and large, and hence yields the required weak compactness of  $R(\la, Z_{0,m})$. Since we are in $L_1$ space, \cite[Lemma 14]{MKM1} gives its compactness.

Next, assumption \eqref{lim} allows for the application of the Miyadera--Desch theorem to show that $K_{0,m} = Z_{0,m}+B_m$ with the domain $D(Z_{0,m})$ (where $B_m =\mc B_{D(Z_{0,m})}$, see \eqref{lin2}) is the generator of the full semigroup \sem{S_{K_{0,m}}} and hence, from \begin{equation*}
R( \lambda, K_{0,m})=R(\lambda, Z_{0,m})\sum_{n=0}^{+\infty }\left[
B_mR(\lambda,Z_{0,m})\right] ^{n},
\end{equation*}
$R( \lambda, K_{0,m})$ is compact.

The last step is to transfer some benefits of the compactness of $R( \lambda, K_{0,m})$ to \sem{S_{K_{0,m}}}. We split
\begin{equation*}
Z_{0,m}+B_m= \left( Z_{0,m}+\widehat{B}_m\right) +\overline{B}_m,
\end{equation*}%
where $0\leq \widehat{B}_m\lneqq {B}_m$ and $\overline{B}_m$ is a bounded integral operator on $X_m$ with compactly supported bounded kernel, hence weakly compact. Thus, if \sem{\widehat{S}_m} is the semigroup generated by $Z_{0,m}+\widehat B_m$, then
\begin{equation*}
S_{K_{0,m}}(t)=\widehat{S}_m(t)+\int_{0}^{t}\widehat{S}_m(t-s)\overline{B}_m S_{K_{0,m}}(s)ds
\end{equation*}%
and $\int_{0}^{t}\widehat{S}_m(t-s)\overline{B}_mS_{K_{0,m}}(s)ds$ is a weakly
compact operator (by \cite{Schl, MKM2}). Thus, by \cite{Ka}, we have the equality of the essential radii,
\begin{equation}
r_{ess}(\widehat{S}_m(t))=r_{ess}(S_{K_{0,m}}(t)).  \label{Stability essential radius}
\end{equation}%
On the other hand,
\begin{equation*}
R(\lambda,Z_{0,m}+\widehat{B}_m)\lneqq R(\lambda,K_{0,m}).
\end{equation*}
Under an assumption ensuring that $R(\lambda,K_{0,m})$ is  irreducible, its compactness gives the strict inequality of spectral radii,
\begin{equation*}
r_{\sigma }\left[ R(\lambda, Z_{0,m}+\widehat{B}_m)\right] <r_{\sigma }\left[
R(\lambda,K_{0,m})\right],
\end{equation*}%
by \cite{IVo}.   This implies inequality of spectral bounds of the generators,
\begin{equation}
s(Z_{0,m}+\widehat{B}_m)<s(K_{0,m}),  \label{strict inequality}
\end{equation}%
and hence, in particular,
\begin{equation*}
s(K_{0,m})>-\infty.
\end{equation*}
 Since in  $L^{1}$-spaces the type of a positive semigroup coincides
with the spectral bound of its generator, see e.g., \cite[Theorem 3.37]{JA}, we get
\begin{equation*}
r_{ess}(S_{K_{0,m}}(t))=r_{ess}(\widehat{S}(t))\leq r_{\sigma }(\widehat{S}(t))=
e^{s(Z_{0,m}+\widehat{B}_m)t}<e^{s(K_{0,m})t}=r_{\sigma }(S_{K_{0,m}}(t)).
\end{equation*}
Using again irreducibility of  \sem{S_{K_{0,m}}},  the spectral bound $s(T+B)$ of its generator is its dominant eigenvalue and a simple pole of the resolvent,  see \cite[Corollary 3.16 of Chapter C-III]{Nag}. Furthermore, by \cite[Proposition  3.4 of Chapter VI]{Klaus1}, $s(K_{0,m})$ is a simple eigenvalue, that is, its eigenspace is one-dimensional, which gives the
asynchronous exponential growth property of the semigroup.
\section{Generation theorems}\label{secGT}
The purpose of this section is twofold. First, we discuss classical wellposedness of \eqref{Tsapele}.
Our analysis is based on the observation (see \cite[Section VI.4]{Klaus1}) that in certain functional settings, the
integro-differential operator, defining \eqref{eq1} and equipped with the nonlocal boundary condition \eqref{Beta}
can be viewed as a multiplicative perturbation of the same operator but coupled with the homogeneous boundary data. Accordingly, we show that the classical wellposedness of \eqref{Tsapele} can be deduced directly from
the known generation results for $\beta=0$.

Second, we characterize a connection between the semigroups associated to the homogeneous ($\beta=0$)
and to the inhomogeneous ($\beta\neq 0$) boundary data. As we shall see later in Section~\ref{secsg},
this characterization, combined with the spectral theory of \cite{MKJB},
yields a simple description of the large time dynamics of \eqref{Tsapele}.

\subsection{The transport semigroup}
Here, we consider
\begin{equation}\label{transG}
\partial _t u = -\mc Tu +\mc Au =-\partial _x(ru) -au=:\mc Zu,
\end{equation}
with the initial and boundary conditions given, respectively, by \eqref{eqic} and \eqref{Beta}.
Let $X_m$, with some $m\ge 1$, be fixed. First, we define the maximal realization of $\mc Z$
in $X_m$ by
\begin{subequations}\label{Zm}
\begin{equation}\label{Zma}
Z_{m} =\mc Z|_{D(Z_m)},
\end{equation}
where
\begin{equation}\label{Zmb}
D(Z_m):=\lbrace u\in X_{m}: \p_x(ru), au \in X_{m}    \rbrace.
\end{equation}
\end{subequations}
Next, we introduce the operator $Z_{\beta,m}$, $0\le \beta \in X^\ast_m$, as the restriction of $Z_m$ to the domain
\begin{equation}\label{bcb}
D(Z_{\beta,m}):=\left\{ u\in D(Z_m): \lim _{x\to 0^+}r(x)u(x)=\int _0 ^{\infty}\beta(y)u(y)dy  \right\},
\end{equation}
defining the relevant boundary condition \eqref{Beta}.
In particular, functions $u\in D(Z_{0,m})$ satisfy the homogeneous boundary condition
\begin{equation}\label{bc0}
\lim _{x\to 0^+}r(x)u(x)=0.
\end{equation}
We note that \eqref{Zmb} implies that $ru$ is absolutely continuous in $\mathbb{R}_+$ and hence
the boundary conditions in \eqref{bcb} and \eqref{bc0} are well-defined.

The resolvent equation, associated to $(Z_{\beta,m}, D(Z_{\beta,m}))$, is given by
\begin{equation}\label{nn1}
\lambda u + \partial _x(ru) + au= f,
\end{equation}
with the boundary condition given in \eqref{bcb}.
The general solution to \eqref{nn1} is
\begin{equation}\label{RT}
u(x)=\frac{e^{-\lambda R(x) -Q(x)}}{r(x)}\int _0^xf(y)e^{\lambda R(y) +Q(y)}dy + C\frac{e^{-\lambda R(x)-Q(x)}}{r(x)},
\end{equation}
where $C$ is an arbitrary constant.
The special case of $\beta = 0$ is settled in \cite{Lamb2020,MKJB}, where it is shown that the resolvent of
$(Z_{0,m}, D(Z_{0,m}))$ is given explicitly by
\begin{subequations}\label{res0prop}
\begin{equation}\label{res0a}
u(x)=[R(\la,Z_{0,m})f](x) = \frac{e^{-\lambda R(x) -Q(x)}}{r(x)}\int _0^xf(y)e^{\lambda R(y) +Q(y)}dy,
\quad \la >\omega_{r,m}:=2mr_0,
\end{equation}
see \eqref{RRe}, and satisfies
\begin{equation}\label{res0}
\parallel R(\lambda,Z_{0,m})f \parallel _{m} \le \frac{1}{\lambda -\omega_{r,m}}\|f\|_m.
\end{equation}
\end{subequations}
The analysis of the general scenario $\beta \neq 0$ relies on two technical results.
\begin{lemma}\cite{Lamb2020}\label{Lema}
Let $m\geq 1$ be fixed. If \eqref{RRe}, \eqref{g1}, \eqref{aa} are satisfied and
$\lambda > \omega_{r,m}:=2mr_0$, then\\
(a) \label{First} for any $0<a<b\le \infty$,
 \begin{equation}\label{Pm1}
P_{m,1}(a,b)=\int _a^{b}\frac{e^{-\lambda R(s)}}{r(s)}(1+s^m)ds
\le \frac{1}{\lambda -\omega_{r,m}}e^{-\lambda R(a)}(1+a^m),
\end{equation}
(b) \label{Second} for any $0<a<b\le \infty$,
\begin{equation}\label{pm2}
P_{m,2}(a,b)=\int _a^{b}\frac{(\lambda +a(s))e^{-\lambda R(s) -Q(s)}}{r(s)}(1+s^m)ds
\le \frac{\lambda}{\lambda -\omega_{r,m}}e^{-\lambda R(a)-Q(a)}(1+a^m).
\end{equation}
\end{lemma}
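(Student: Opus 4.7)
The plan is to handle both estimates by the same two-step scheme: integrate by parts so that the boundary term $e^{-\lambda R(a)}(1+a^m)$ (or its analogue with $Q$) appears naturally, then control the remainder by a multiple of the same integral $P_{m,i}(a,b)$, producing a closed recursion that can be solved because $\lambda > \omega_{r,m} = 2mr_0$.

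For part (b) I would proceed first, because the integrand is an exact derivative: since $\frac{d}{ds}\bigl(-e^{-\lambda R(s)-Q(s)}\bigr) = \frac{(\lambda+a(s))e^{-\lambda R(s)-Q(s)}}{r(s)}$, integration by parts against the factor $1+s^m$ gives
\begin{equation*}
P_{m,2}(a,b) = \bigl[-e^{-\lambda R(s)-Q(s)}(1+s^m)\bigr]_a^b + m\int_a^b e^{-\lambda R(s)-Q(s)}\,s^{m-1}\,ds.
\end{equation*}
Dropping the negative contribution at $s=b$ and using the elementary inequalities $s^{m-1}\le 1+s^m$ (valid for $m\ge 1$, $s\ge 0$) together with $r(s)\le r_0(1+s)$, one obtains $m s^{m-1} \le \tfrac{2mr_0(1+s^m)}{r(s)}$. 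Inserting this and then bounding $1 \le \tfrac{\lambda+a(s)}{\lambda}$ recasts the remainder as $\tfrac{2mr_0}{\lambda}P_{m,2}(a,b)$. The hypothesis $\lambda>2mr_0$ then yields \eqref{pm2} after solving for $P_{m,2}(a,b)$.

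Part (a) follows the same template with the identity $\frac{e^{-\lambda R(s)}}{r(s)} = -\frac{1}{\lambda}\frac{d}{ds}e^{-\lambda R(s)}$; after integration by parts the remainder $\tfrac{m}{\lambda}\int_a^b e^{-\lambda R(s)} s^{m-1}\,ds$ is turned into $\tfrac{2mr_0}{\lambda}P_{m,1}(a,b)$ by the same $r(s)\le r_0(1+s)$ trick, and the same condition $\lambda>2mr_0$ produces \eqref{Pm1}.

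The one genuine obstacle is that the recursion $P \le \textrm{boundary}+ \tfrac{2mr_0}{\lambda} P$ is meaningful only if $P_{m,i}(a,b)$ is known a priori to be finite, which is non-trivial when $b=\infty$ since $1/r$ need not be bounded at infinity. To secure this, I would first make the substitution $t=R(s)$; the bound $r(s)\le r_0(1+s)$ gives $R(s)\ge \tfrac{1}{r_0}\log(1+s)$, hence $R^{-1}(t)\le e^{r_0 t}-1$ and $1+(R^{-1}(t))^m\le e^{mr_0 t}$. The integrals then collapse to $\int_{R(a)}^{R(b)}e^{-\lambda t}(1+(R^{-1}(t))^m)\,dt$ (times $\lambda+a(R^{-1}(t))$ in case (b), where $a(R^{-1}(t))/r(R^{-1}(t))$ contributes a $dQ$-integral that is handled in parallel), and are dominated by $\int_{R(a)}^\infty e^{-(\lambda-mr_0)t}\,dt<\infty$ thanks to $\lambda>2mr_0>mr_0$. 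Once finiteness is established, the integration-by-parts recursion can be legitimately rearranged, and the sharp constant $\tfrac{1}{\lambda-\omega_{r,m}}$ (respectively $\tfrac{\lambda}{\lambda-\omega_{r,m}}$) emerges directly.
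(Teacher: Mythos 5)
Your proof is correct. The paper does not prove this lemma itself; it is stated with a citation to \cite{Lamb2020}, so there is no in-text argument to compare against. The route you take --- recognising $\tfrac{(\lambda+a)e^{-\lambda R-Q}}{r}$ (resp. $\tfrac{e^{-\lambda R}}{r}$) as an exact derivative, integrating by parts against $1+s^m$, dropping the nonpositive boundary term at the right endpoint, and closing the resulting self-referential inequality using $ms^{m-1}r(s)\le 2mr_0(1+s^m)$ --- is the natural one for estimates of this type and is consistent with the related computations that do appear in the paper (cf.\ the proofs of Lemma~\ref{est10} and Theorem~\ref{resG}). Each elementary step checks out: $\tfrac{d}{ds}(-e^{-\lambda R-Q})=\tfrac{(\lambda+a)e^{-\lambda R-Q}}{r}$ a.e.\ because $R,Q$ are absolutely continuous; $s^{m-1}\le 1+s^m$ and $r(s)\le r_0(1+s)$ give $ms^{m-1}\le 2mr_0(1+s^m)/r(s)$; and $1\le(\lambda+a)/\lambda$ turns the remainder back into $\tfrac{2mr_0}{\lambda}P_{m,2}$.

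One minor remark: your worry about finiteness is legitimate, but the substitution $t=R(s)$ machinery is heavier than needed. Since the integrands are nonnegative and $1/r$ is continuous (hence bounded) on any compact $[a,b']\subset(0,\infty)$, the quantities $P_{m,i}(a,b')$ are trivially finite for $b'<\infty$, and the recursion already yields the bound $P_{m,i}(a,b')\le\tfrac{\cdot}{\lambda-\omega_{r,m}}e^{-\lambda R(a)}(\cdots)$ \emph{uniformly in $b'$}; letting $b'\uparrow b$ and invoking monotone convergence then gives the result for $b=\infty$ at no extra cost. That said, your explicit verification via $R^{-1}(t)\le e^{r_0t}-1$ and $1+(R^{-1}(t))^m\le(1+R^{-1}(t))^m\le e^{mr_0t}$ is also correct and independently shows that the spectral bound $\lambda>mr_0$ (weaker than the hypothesis) already ensures convergence of the improper integral.
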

The second result is an analogue of \cite[Lemma 4.2, p. 218]{Klaus1}, adapted to our settings.
\begin{lemma}\label{lmproj}
Assume \eqref{RRe}, \eqref{g1}, \eqref{aa} and \eqref{beeta} are satisfied. Then the the operator
\begin{subequations}\label{eqproj}
\begin{equation}\label{eqproja}
E_\lambda = e_\lambda\frac{\langle\beta,\cdot\rangle}{1-\langle\beta,e_\lambda\rangle},
\quad e_\la(x) := \frac{d_{\lambda}(x)}{r(x)}:=\frac{e^{-\lambda R(x) -Q(x)}}{r(x)},
\quad \lambda>\omega_{r,m}+\beta_m,
\end{equation}
is well defined and
\begin{equation}\label{eqprojb}
\|E_\lambda f\|_m \le \frac{\beta_m}{\lambda - \omega_{r,m} - \beta_m}\|f\|_m,
\quad f\in X_m, \quad  \lambda>\omega_{r,m}+\beta_m.
\end{equation}
\end{subequations}
Furthermore, for $\lambda>\omega_{r,m}+\beta_m$, the following holds
\begin{subequations}\label{projprop}
\begin{align}
\label{projpropa}
&(I+E_\lambda)^{-1} = I - e_\lambda\langle\beta,\cdot \rangle,\\
\label{projpropb}
&(I+E_\lambda) D(Z_{0,\beta}) = D(Z_{m,\beta}),\\
\label{projpropc}
&\lambda I - Z_{0,m} =  (\lambda I - Z_{\beta,m})(I+E_\lambda).
\end{align}
\end{subequations}
\end{lemma}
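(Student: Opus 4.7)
The plan is to first establish the scalar estimate $\langle\beta,e_\lambda\rangle<1$ for the admissible range of $\lambda$, which is the single analytic ingredient, and then reduce the remaining claims to algebraic manipulations and one careful boundary-limit computation. To get the scalar bound I would apply Lemma~\ref{Lema}(a) with $a=0$, $b=\infty$, using $e^{-Q(s)}\le 1$, to obtain $\|e_\lambda\|_m\le (\lambda-\omega_{r,m})^{-1}$, whence $\langle\beta,e_\lambda\rangle\le\beta_m/(\lambda-\omega_{r,m})<1$ as soon as $\lambda>\omega_{r,m}+\beta_m$. This makes $E_\lambda$ well-defined, and \eqref{eqprojb} follows at once from $\|E_\lambda f\|_m\le \|e_\lambda\|_m\beta_m\|f\|_m/(1-\langle\beta,e_\lambda\rangle)$ together with the preceding estimates.

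For the inverse identity \eqref{projpropa} I would expand $(I+E_\lambda)(I-e_\lambda\langle\beta,\cdot\rangle)f$ by brute force: $E_\lambda f$ produces $\langle\beta,f\rangle(1-\langle\beta,e_\lambda\rangle)^{-1}e_\lambda$, while the cross term $-E_\lambda(e_\lambda\langle\beta,f\rangle)$ supplies exactly $-\langle\beta,e_\lambda\rangle(1-\langle\beta,e_\lambda\rangle)^{-1}\langle\beta,f\rangle e_\lambda$; the sum collapses to $f$, and the opposite composition is analogous.

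The identity \eqref{projpropc} hinges on the fact that $e_\lambda$, by construction, solves the homogeneous ODE $\lambda u+\partial_x(ru)+au=0$ pointwise. Hence $e_\lambda\in D(Z_m)$ (the required $X_m$-integrability of $ae_\lambda$ and of $\partial_x(re_\lambda)=-(\lambda+a)e_\lambda$ follows from Lemma~\ref{Lema}(b)), and $(\lambda I-Z_m)e_\lambda=0$. Applying $\lambda I-Z_{\beta,m}$, viewed as a restriction of $\lambda I-Z_m$, to $(I+E_\lambda)u$ therefore kills the $e_\lambda$-part and returns $(\lambda I-Z_{0,m})u$, provided the output genuinely lies in $D(Z_{\beta,m})$.

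That last proviso is the content of \eqref{projpropb}, and it is the main obstacle. For $u\in D(Z_{0,m})$ I have to verify $\lim_{x\to 0^+}r(x)[(I+E_\lambda)u](x)=\langle\beta,(I+E_\lambda)u\rangle$. Using \eqref{inteLim}, the explicit formula $r(x)e_\lambda(x)=e^{-\lambda R(x)-Q(x)}\to 1$ as $x\to 0^+$, so the left side equals $\langle\beta,u\rangle/(1-\langle\beta,e_\lambda\rangle)$, and a short scalar calculation shows $\langle\beta,(I+E_\lambda)u\rangle$ equals the same quantity. Thus the nonlocal correction built from the factor $(1-\langle\beta,e_\lambda\rangle)^{-1}$ is precisely tuned to match the boundary data. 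The reverse inclusion $(I-e_\lambda\langle\beta,\cdot\rangle)D(Z_{\beta,m})\subset D(Z_{0,m})$ is obtained from the same boundary limit together with \eqref{projpropa}. Recognising and verifying this self-consistent boundary match is the delicate part; everything else is bookkeeping on top of Lemma~\ref{Lema}.
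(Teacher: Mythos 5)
Your proof is correct and takes essentially the same approach as the paper. The paper derives the bound \eqref{eqprojb} from Lemma~\ref{Lema}(a) exactly as you do, and then delegates \eqref{projprop} to \cite[Lemma VI.4.2]{Klaus1}; your explicit verification of the inverse formula, the observation that $(\lambda I - Z_m)e_\lambda = 0$ (with $e_\lambda\in D(A_m)$ by Lemma~\ref{Lema}(b)), and the boundary-limit computation using $r(x)e_\lambda(x)\to 1$ as $x\to 0^+$ are precisely the arguments the paper cites by reference.
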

\begin{proof}
On the account of \eqref{Pm1}, we have $\|e_\lambda\|_m \le \frac{1}{\lambda - \omega_{r,m}}$. Hence,
for $\lambda>\omega_{r,m}+\beta_m$, the denominator in \eqref{eqproja} is strictly positive and
\eqref{eqprojb} follows. The remainder of the proof is a verbatim repetition of the arguments from
\cite[Lemma VI.4.2]{Klaus1} and is omitted.
\end{proof}

Lemma~\ref{lmproj} indicates that $(Z_{\beta,m}, D(Z_{\beta,m}))$ can be viewed as a multiplicative perturbation of
$(Z_{0,m}, D(Z_{0,m}))$. This fact, combined with the estimates of Lemma~\ref{Lema}, yields a
simple characterization of the resolvent $R(\lambda,Z_{\beta,m})$.
\begin{theorem}\label{resG}
Let the coefficients $r,a$ and $b$ satisfy \eqref{RRe}, \eqref{g1}, \eqref{aa} and \eqref{beeta}.
Then, for any $m\geq 1$ and $\lambda > \omega_{r,m} +\beta_m$, the resolvent of
$(Z_{\beta,m}, D(Z_{\beta,m}))$ is given  explicitly by
\begin{subequations}\label{resolprop}
\begin{equation}\label{RESOL}
\begin{split}
\bigl[R(\lambda,Z_{\beta,m})f\bigr](x)
&= \bigl[(I+E_\lambda)R(\lambda,Z_{0,m})f\bigr](x)\\
&=\bigl[R(\lambda,Z_{0,m})f\bigr](x)+
e_\la(x)\frac{\langle\beta ,R(\lambda,Z_{0,m})f\rangle}{1-\langle\beta ,e_\la\rangle},\qquad f\in X_m.
\end{split}
\end{equation}
Furthermore, the following estimate holds
\begin{equation}\label{RESOLb}
\| R(\lambda ,Z_{\beta,m})f \|_{m}\le \frac{1}{\lambda -\omega_{r,m} - \beta_m}\|f\|_m, \quad
\lambda > \omega_{r,m} +\beta_m.
\end{equation}
\end{subequations}
\end{theorem}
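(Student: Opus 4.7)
The plan is to deduce both claims directly from the factorization $\lambda I - Z_{0,m} = (\lambda I - Z_{\beta,m})(I+E_\lambda)$ provided by Lemma~\ref{lmproj}, and then to chain the scalar bound \eqref{res0} on $R(\lambda,Z_{0,m})$ with the operator bound \eqref{eqprojb} on $E_\lambda$. Since the algebraic structure has already been assembled in Lemma~\ref{lmproj}, the proof should amount to a careful bookkeeping exercise plus one simple arithmetic simplification.

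For the identity \eqref{RESOL}, I would first verify that $(I+E_\lambda)R(\lambda,Z_{0,m})$ is the resolvent of $Z_{\beta,m}$. Multiplying \eqref{projpropc} on the right by $R(\lambda,Z_{0,m})$ gives
$$I = (\lambda I - Z_{\beta,m})(I+E_\lambda)R(\lambda,Z_{0,m}),$$
with $(I+E_\lambda)R(\lambda,Z_{0,m})$ landing in $D(Z_{\beta,m})$ by virtue of \eqref{projpropb}; this yields surjectivity with a right inverse. Injectivity is the only step that requires a little care: if $u\in D(Z_{\beta,m})$ satisfies $(\lambda I - Z_{\beta,m})u=0$, I would set $w=(I+E_\lambda)^{-1}u = u - e_\lambda\langle\beta,u\rangle$ using \eqref{projpropa}, observe via \eqref{projpropb} that $w\in D(Z_{0,m})$, and then apply \eqref{projpropc} to conclude $(\lambda I - Z_{0,m})w=0$; since $\lambda>\omega_{r,m}$, \eqref{res0prop} forces $w=0$, and therefore $u=0$. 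The second line of \eqref{RESOL} follows by substituting the explicit form of $E_\lambda$ from \eqref{eqproja}.

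For the estimate \eqref{RESOLb}, I would apply the triangle inequality to $(I+E_\lambda)R(\lambda,Z_{0,m})f$, use \eqref{res0} to bound $\|R(\lambda,Z_{0,m})f\|_m$ and then \eqref{eqprojb} to bound the contribution of $E_\lambda$ applied to it. This produces
$$\|R(\lambda,Z_{\beta,m})f\|_m \le \frac{1}{\lambda-\omega_{r,m}}\|f\|_m + \frac{\beta_m}{(\lambda-\omega_{r,m})(\lambda-\omega_{r,m}-\beta_m)}\|f\|_m,$$
and factoring $(\lambda-\omega_{r,m})^{-1}$ followed by combining the fractions collapses the bracketed factor to $(\lambda-\omega_{r,m}-\beta_m)^{-1}$, which is the desired estimate.

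I do not anticipate a significant obstacle here; the only subtlety is the injectivity argument, which must invoke \eqref{projpropa}--\eqref{projpropc} jointly rather than in isolation. Everything else is a direct consequence of the two preceding lemmas, and the final arithmetic telescoping is straightforward.
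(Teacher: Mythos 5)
Your proposal is correct and follows essentially the same route as the paper: both prove \eqref{RESOL} from \eqref{projpropb}--\eqref{projpropc}, and both obtain \eqref{RESOLb} by chaining the operator bound on $I+E_\lambda$ from \eqref{eqprojb} with the resolvent estimate \eqref{res0}. Your version merely spells out the bijectivity of $\lambda I - Z_{\beta,m}$ more explicitly (the paper asserts it ``follows directly'') and organizes the final arithmetic via the triangle inequality rather than by first bounding the operator norm of $I+E_\lambda$, but the content is identical.
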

\begin{proof}
Formula \eqref{RESOL} follows directly from \eqref{projpropb} and \eqref{projpropc}.
Further, on the account of \eqref{RESOL} and \eqref{eqprojb}, for $f\in X_m$, we have
\[
\|R(\lambda ,Z_{\beta,m})f\|_m
= \|(I+E_\lambda)R(\lambda ,Z_{0,m})f\|_m
\le \frac{\lambda - \omega_{r,m}}{\lambda - \omega_{r,m}-\beta_m}
\|R(\lambda,Z_{0,m})f\|_m
\]
and \eqref{RESOLb} is the direct consequence of \eqref{res0}.
\end{proof}
Since the operators $R(\lambda, Z_{0,m}),E_\lambda\in \mathcal{L}(X_m)$, $m\ge 1$,
are positive (see formulae \eqref{res0a} and \eqref{eqproja}, respectively), we immediately obtain
\begin{theorem}\label{thtranspgen}
For any given $m\geq 1$, the operator $(Z_{\beta,m},D(Z_{\beta,m}))$ generates a strongly continuous positive semigroup, say $\left( G_{Z_{\beta,m}}(t)\right)_{t\geq 0}$, on $X_{m}$.
\end{theorem}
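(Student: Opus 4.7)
The plan is to apply the Hille--Yosida theorem, since Theorem~\ref{resG} and Lemma~\ref{lmproj} have already done essentially all the analytical work. First, I would iterate the resolvent estimate \eqref{RESOLb}: because $M=1$ on the right-hand side, one immediately obtains
\[
\|R(\lambda, Z_{\beta,m})^n f\|_m \le \frac{1}{(\lambda - \omega_{r,m} - \beta_m)^n}\|f\|_m,\qquad n\ge 1,\ \lambda>\omega_{r,m}+\beta_m,
\]
which is the Hille--Yosida resolvent bound with type $(1, \omega_{r,m}+\beta_m)$. Since $(\omega_{r,m}+\beta_m,\infty)\subset\rho(Z_{\beta,m})$ and the resolvent is bounded, $Z_{\beta,m}$ is automatically closed.

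Next, I would verify that $D(Z_{\beta,m})$ is dense in $X_m$. By \eqref{projpropa} and \eqref{eqprojb} the operator $I+E_\lambda\in\mathcal{L}(X_m)$ is a bounded bijection of $X_m$ onto itself with bounded inverse $I - e_\lambda\langle\beta,\cdot\rangle$; by \eqref{projpropb} it maps $D(Z_{0,m})$ bijectively onto $D(Z_{\beta,m})$. Since $D(Z_{0,m})$ is already known to be dense in $X_m$ (the operator $Z_{0,m}$ generates a $C_0$-semigroup by \cite{Lamb2020, MKJB}), and bounded isomorphisms preserve density, $D(Z_{\beta,m})$ is dense in $X_m$ as well. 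Hille--Yosida then yields a strongly continuous semigroup $\sem{G_{Z_{\beta,m}}}$ on $X_m$ with generator $(Z_{\beta,m}, D(Z_{\beta,m}))$.

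For positivity, I would observe that for $\lambda>\omega_{r,m}+\beta_m$ the factor $\langle\beta, e_\lambda\rangle \le \beta_m\|e_\lambda\|_m \le \beta_m/(\lambda-\omega_{r,m})<1$, so the denominator in \eqref{eqproja} is strictly positive, making $E_\lambda$ a positive operator (since $e_\lambda\ge 0$ and $\beta\ge 0$). Together with the positivity of $R(\lambda, Z_{0,m})$ seen from \eqref{res0a}, the identity \eqref{RESOL} shows that $R(\lambda, Z_{\beta,m})$ is positive for all sufficiently large $\lambda$, which is the standard characterization of positivity of the generated semigroup.

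I do not expect a serious obstacle here: the real work has been done in Lemma~\ref{lmproj} and Theorem~\ref{resG}. The only step requiring a moment's thought is density of the domain, which reduces to the fact that $I+E_\lambda$ is a bounded automorphism of $X_m$ preserving the images of the two domains; once this is noted, the remainder is a direct application of the Hille--Yosida theorem and of the positivity criterion via the resolvent.
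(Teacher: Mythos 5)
Your proposal is correct and matches the paper's (very terse) argument: the paper simply observes that $R(\lambda,Z_{0,m})$ and $E_\lambda$ are positive and lets the Hille--Yosida conclusion follow from the $M=1$ bound of \eqref{RESOLb}, with density of $D(Z_{\beta,m})$ implicitly inherited from $D(Z_{0,m})$ via the bounded automorphism $I+E_\lambda$. You have spelled out exactly these steps — iterated resolvent estimate, closedness, density via \eqref{projpropb}, and positivity of the resolvent — so this is the same route, just made explicit.
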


\subsection{The growth--fragmentation semigroup}\label{transfragG}
Recalling notation \eqref{n0}, we assume
\begin{equation}
\begin{split}
m >1 & \quad \text{if} \quad 0\leq l\leq 1,\\
m\geq l&\quad \text{if}\quad l>1.
\end{split}
\label{mass}
\end{equation}
Next, we consider the full equation \eqref{Tacp}, written in the operator form in $X_m$ as
\begin{equation}\label{lin2}
\p_t u= Z_{\beta,m}u + B_{m}u,  \quad t>0,
\end{equation}
where $B_m$ is the restriction of $\mc B$ to $D(Z_{\beta,m})$, see \cite[Lemma 5.1.4]{JWL}. Under \eqref{mass}, this restriction
is well-defined as $D(Z_{\beta,m})\subset D(A_m):=\{u\in X_m\,:\,au \in X_m\}$ and,  by \eqref{nm}
and \eqref{n0}, for $0\leq u \in D(A_m),$
\begin{equation}\label{Bm}
\begin{split}
\|\mc B u\|_m &= \cl{0}{\infty}\left(\cl{x}{\infty} a(y)b(x,y)u(y)dy\right)(1+x^m)dx
= \cl{0}{y}a(y)u(y)(n_0(y)+n_m(y))dy\\
&\leq 2b_0 \cl{0}{y}a(y)u(y)(1+y^m)dy<\infty.
\end{split}
\end{equation}
We mention that the solvability of \eqref{lin2} with $\beta = 0$ is completely settled in
\cite[Theorem 2.2]{Lamb2020}, where it is shown that $(B_m, D(A_m))$ is a positive Miyadera perturbation of
$(Z_{0,m}, D(Z_{0,m}))$, i.e.,
\begin{equation}\label{demi0}
\|B_m R(\lambda, Z_{0,m})f\|_m \le c_{0,m}\|f\|_m,\quad 0< c_{0,m} <1,\quad \omega_{r,m}<\lambda_0<\lambda,
\end{equation}
for sufficiently large values of $\lambda_0$. Using this fact, together with Lemmas~\ref{Lema}--\ref{lmproj}, we obtain
\begin{theorem}\label{LIN2}
Let \eqref{RRe}, \eqref{g1}, \eqref{aa}, \eqref{beeta} and \eqref{mass} be satisfied.
Then $(B_{m}, D(A_{m}))$ is a Miyadera perturbation of $(Z_{\beta,m}, D(Z_{\beta, m}))$,
and hence $(K_{\beta,m},D(Z_{\beta,m})):=(Z_{\beta,m} + B_{m},D(Z_{\beta,m}))$
generates a positive $C_0$-semigroup, say $(S_{K_{\beta,m}}(t))_{t\geq 0}$, in $X_{m}$.
Furthermore,
\begin{equation}\label{eqsimb}
S_{K_{\beta,m}}(t) \ge S_{K_{0,m}}(t), \quad t\ge 0.
\end{equation}
\end{theorem}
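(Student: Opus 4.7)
The plan is to verify the Miyadera perturbation hypothesis for $B_m$ with respect to $Z_{\beta,m}$ by combining the resolvent factorisation $R(\lambda,Z_{\beta,m})=(I+E_\lambda)R(\lambda,Z_{0,m})$ from Theorem~\ref{resG} with the known Miyadera bound \eqref{demi0} for the homogeneous case. Once the resolvent estimate with constant below $1$ is obtained, Miyadera's perturbation theorem yields generation of $(S_{K_{\beta,m}}(t))_{t\ge 0}$; positivity follows from the Dyson--Phillips series, and the comparison \eqref{eqsimb} is read off the Neumann series of the resolvents.

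For the key estimate I would split
\[
B_m R(\lambda,Z_{\beta,m})f = B_m R(\lambda,Z_{0,m})f + B_m E_\lambda R(\lambda,Z_{0,m})f.
\]
The first term is bounded by $c_{0,m}\|f\|_m$ with $c_{0,m}<1$ for $\lambda>\lambda_0$, by \eqref{demi0}. Since $E_\lambda g = \frac{\langle\beta,g\rangle}{1-\langle\beta,e_\lambda\rangle}\,e_\lambda$, the second term is a scalar multiple of $B_m e_\lambda$. Combining \eqref{Bm} with Lemma~\ref{Lema}(b) applied on $(0,\infty)$ gives
\[
\|B_m e_\lambda\|_m \le 2b_0\int_0^\infty a(y)\,e_\lambda(y)\,(1+y^m)\,dy \le \frac{2b_0\lambda}{\lambda-\omega_{r,m}},
\]
while \eqref{res0} and the denominator estimate from the proof of Lemma~\ref{lmproj} supply $|\langle\beta,R(\lambda,Z_{0,m})f\rangle|/|1-\langle\beta,e_\lambda\rangle| \le \beta_m(\lambda-\omega_{r,m}-\beta_m)^{-1}\|f\|_m$. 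The second contribution is therefore $O(\lambda^{-1})$ as $\lambda\to\infty$, so the total Miyadera constant is strictly less than $1$ for $\lambda$ large enough. Miyadera's theorem then delivers the $C_0$-semigroup $(S_{K_{\beta,m}}(t))_{t\ge 0}$ generated by $(K_{\beta,m},D(Z_{\beta,m}))$, and positivity is inherited from the Dyson--Phillips expansion, whose factors are the positive semigroup $(G_{Z_{\beta,m}}(t))_{t\ge 0}$ of Theorem~\ref{thtranspgen} and the positive operator $B_m$.

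For the comparison \eqref{eqsimb}, note that $E_\lambda\ge 0$ for $\lambda > \omega_{r,m}+\beta_m$ (its defining scalar is nonnegative with a positive denominator), so $R(\lambda,Z_{\beta,m})\ge R(\lambda,Z_{0,m})\ge 0$. Positivity of $B_m$ and induction give $[B_m R(\lambda,Z_{\beta,m})]^n \ge [B_m R(\lambda,Z_{0,m})]^n$ for every $n$; summing the Neumann series
\[
R(\lambda,K_{\beta,m}) = R(\lambda,Z_{\beta,m})\sum_{n\ge 0}\bigl[B_m R(\lambda,Z_{\beta,m})\bigr]^n
\]
and its $\beta=0$ analogue yields $R(\lambda,K_{\beta,m})\ge R(\lambda,K_{0,m})$ for large $\lambda$. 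The Euler exponential formula $S(t)f=\lim_{n\to\infty}(n/t)^n R(n/t,K)^n f$ then transfers the inequality to the semigroups.

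The principal technical obstacle is the bound on $\|B_m e_\lambda\|_m$: the possibly unbounded fragmentation rate $a$ must be absorbed by the exponential factor $e^{-\lambda R-Q}$ built into $e_\lambda$, which is exactly what Lemma~\ref{Lema}(b) provides. Everything else is a matter of assembling the positivity and resolvent identities established in the previous section.
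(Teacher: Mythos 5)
Your proposal is correct and follows essentially the same route as the paper: decompose $B_m R(\lambda,Z_{\beta,m})$ via the multiplicative factorisation $R(\lambda,Z_{\beta,m})=(I+E_\lambda)R(\lambda,Z_{0,m})$, control $B_m E_\lambda R(\lambda,Z_{0,m})$ through the rank-one structure of $E_\lambda$ and the bound on $\|B_m e_\lambda\|_m$ coming from \eqref{pm2} and \eqref{Bm}, and invoke the Desch--Miyadera theorem. The semigroup comparison via the Neumann resolvent series combined with the Euler exponential (Hille) formula is exactly the paper's argument as well.
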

\begin{proof}
As in \cite{Lamb2020}, the proof is based on the Desch's version of the Miyadera perturbation theorem,
\cite[Lemma 5.12]{JA}.

Since both, $B_m$ and $R(\lambda, Z_{\beta,m})$ are positive and the norm $\|\cdot\|_m$ is additive
in $X_{m,+}$, we only need to show
\begin{equation}
\|B_{m}R(\lambda, Z_{\beta,m}) f\|_{m}\le  c\|f\|_m,\quad 0<c<1,
\end{equation}
for sufficiently large values of $\lambda>\omega_{r,m}+\beta_m$ and $f\geq 0$. By \eqref{RESOL} and \eqref{demi0},
we have
\begin{align*}
\|B_mR(\lambda, Z_{\beta,m}) f\|_m
&\le \| B_m R(\lambda, Z_{0,m}) f \|_m
+ \|B_m E_\lambda R(\lambda, Z_{0,m}) f\|_m\\
&\le c_{0,m}\|f\|_m + \frac{\bigr|\langle\beta, R(\lambda, Z_{0,m}) f \rangle\bigl|}{1-\langle\beta, e_\lambda\rangle}
\|B_m e_\lambda\|_m\\
&\le \left(c_{0,m}  + \frac{\beta_m \|B_m e_\lambda\|_m}{\lambda-\omega_{m,r} - \beta_m} \right) \|f\|_m
=: (c_{0,m} + c_{\beta,m}) \|f\|_m.
\end{align*}
The inclusion $e_\lambda \in D(A_m)$ and bounds \eqref{pm2} and \eqref{Bm} imply that
\[
0< c_{\beta,m} \le \frac{2b_0\beta_m\lambda}{(\lambda-\omega_r)(\lambda-\omega_r-\beta_m)}\to 0,
\]
as $\lambda\to\infty$. Hence, for $\lambda$ sufficiently large, $0<c_{0,m}+c_{\beta,m}<1$ and
$(B_m, D(Z_{\beta,m}))$ is the Miyadera perturbation of $(Z_{\beta,m}, D(A_{m}))$.
By Desch's theorem, $(Z_{\beta,m}+B_{m},D(Z_{\beta,m}))=:(K_{\beta,m},D(Z_{\beta,m}))$
generates a positive semigroup in $X_{m}$, say $(S_{K_{\beta,m}}(t))_{t\geq 0}$.

To prove \eqref{eqsimb}, on the account of Hille's identity (see e.g., \cite[Corollary 5.5, p.~223]{Klaus}),
it is sufficient to verify that $R(\lambda, K_{\beta,m})\ge R(\lambda, K_{0,m})$ for sufficiently large $\lambda$.
Since $(B_m, D(A_m))$ is the Miyadera perturbation of both $(Z_{0,m}, D(Z_{0,m}))$
and $(Z_{\beta,m}, D(Z_{\beta,m}))$, we have,  by \cite[Theorem 5.10]{JA},
\begin{equation}
R(\lambda, K_{\alpha,m}) = R(\lambda,Z_{\alpha,m})\sum\limits_{n=0}^{\infty} (B_mR(\lambda,Z_{\alpha,m}))^n
\label{reseq}
\end{equation}
for $\alpha =0, \beta.$
Since, by \eqref{RESOL}, $R(\lambda,Z_{\beta ,m})\ge R(\lambda,Z_{0,m})\geq 0$ for
$\lambda>\omega_{r,m}+\beta_m$   and $B_m\geq 0$, we immediately get   $R(\lambda, K_{\beta,m})\geq R(\lambda, K_{0,m})$ for large $\lambda$.
\end{proof}

As mentioned in Section~\ref{secnt}, under the additional assumption \eqref{aa1},
\[
0\leq a(x)\leq a_0(1+x^p), \quad x \in \mbb R_+,
\]
we can prove that $(S_{K_{\beta,m}}(t))_{t\geq 0}$ is quasi-contractive in $X_m$, provided
\begin{equation}
\begin{split}
m >1 & \quad \text{if} \quad 0\leq l+p\leq 1,\\
m\geq l+p&\quad \text{if}\quad l+p>1.
\end{split}
\label{massum}
\end{equation}
with $p$ and $l$ defined in (\ref{aa1}) and \eqref{n0}, respectively.
Let $w_m(x):=1+x^m$.

\begin{lemma}\label{est10}
Let \eqref{RRe}, \eqref{aa}, \eqref{nm1} and \eqref{beeta} be satisfied. Then,  for $u \in D(Z_{\beta,m})$, we have
\begin{equation}\begin{split}
&\cl{0}{\infty}[(Z_{\beta,m}+B_{m})u](x)w_m(x){d}x
= -c_{m}(u) \\
&:=\cl{0}{\infty}\beta(x)u(x)dx +
 m\cl{0}{\infty}r(x)u(x)x^{m-1} {d}x
\!-\cl{0}{\infty}(N_0(x)+N_m(x))a(x)u(x) {d}x. \label{subfuncta'}\end{split}
\end{equation}
\end{lemma}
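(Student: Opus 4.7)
The plan is to test \eqref{subfuncta'} termwise, splitting $(Z_{\beta,m}+B_m)u$ into the transport part $-\partial_x(ru)$, the loss part $-au$, and the gain part $\mc Bu(x)=\int_x^\infty a(y)b(x,y)u(y)\,dy$, and integrating each against the weight $w_m$. For the transport part I would integrate by parts on $(0,\infty)$; the derivative $w_m'(x)=mx^{m-1}$ produces the $m\int_0^\infty r(x)u(x)x^{m-1}\,dx$ contribution, the boundary term at $x=0^+$ equals $\int_0^\infty\beta(y)u(y)\,dy$ by the definition \eqref{bcb} of $D(Z_{\beta,m})$, and the boundary term at $x=\infty$ should vanish. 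For the gain part I would apply Fubini's theorem to swap the order of integration, obtaining $\int_0^\infty a(y)u(y)\int_0^y b(x,y)(1+x^m)\,dx\,dy=\int_0^\infty a(y)u(y)(n_0(y)+n_m(y))\,dy$ by \eqref{nm}; absolute integrability needed for Fubini is furnished by \eqref{Bm} applied to $|u|$.

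Combining the loss part with the reshuffled gain part yields
\[
\int_0^\infty a(y)u(y)\bigl[(n_0(y)+n_m(y))-(1+y^m)\bigr]dy=-\int_0^\infty(N_0(y)+N_m(y))a(y)u(y)\,dy,
\]
by the definition \eqref{Nm} applied with the exponent $0$ and with $m$. Assembling the three contributions then gives exactly the right-hand side of \eqref{subfuncta'}, and the rest is arithmetic.

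The main technical point to secure is the vanishing of the boundary term at infinity, namely $\lim_{x\to\infty}r(x)u(x)w_m(x)=0$. My approach is to observe that for $u\in D(Z_{\beta,m})\subset D(Z_m)$ the weighted product has the distributional derivative
\[
\partial_x\bigl(r(x)u(x)w_m(x)\bigr)=\partial_x(r(x)u(x))\,w_m(x)+r(x)u(x)\,mx^{m-1},
\]
and both summands lie in $L^1(0,\infty)$: the first because $\partial_x(ru)\in X_m$ by \eqref{Zmb}, and the second because $r(x)\le r_0(1+x)$ from \eqref{RRe} together with the elementary inequality $x^{m-1}(1+x)\le C\,w_m(x)$, valid for $m>1$, reduces it to $|u|w_m\in L^1$. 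Hence $r(\cdot)u(\cdot)w_m(\cdot)$ is the primitive of an $L^1$ function, so it has a limit as $x\to\infty$, and a nonzero limit $L$ would force $|u(x)|\ge c\,x^{-(m+1)}$ for all sufficiently large $x$, contradicting $\int_0^\infty |u(x)|x^m\,dx<\infty$. With this limit secured, \eqref{subfuncta'} drops out of the calculations outlined above.
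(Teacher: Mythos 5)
Your proof is correct and follows essentially the same route as the paper: split $(Z_{\beta,m}+B_m)u$ into transport, loss and gain, integrate the transport term by parts so that the boundary condition \eqref{bcb} produces the $\int\beta u$ term, use Fubini and \eqref{nm} to write the gain term via $n_0+n_m$, and combine loss and gain via \eqref{Nm}. The one place where you add something is the justification for $\lim_{x\to\infty} r(x)u(x)w_m(x)=0$: you show directly that $r u w_m$ is the primitive of an $L^1$ function (using $\partial_x(ru)\in X_m$ from \eqref{Zmb} and $r(x)x^{m-1}\le C\,w_m(x)$ from \eqref{RRe}), whence the limit exists, and then derive a contradiction with $u\in X_m$ if it were nonzero. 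The paper obtains the existence of the limit more implicitly, by convergence of the other terms in the integration-by-parts identity, and then uses the same contradiction. Both are valid; yours makes the existence of the limit a touch more self-contained.
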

\begin{proof} We have
\begin{align*}
&\cl{0}{\infty}\!\![(Z_{\beta,m}+B_{m})u](x)w_m(x)d x\\
&\phantom{xxxxxxx}=\!-\cl{0}{\infty}\!\!\p_x(r(x)u(x))w_m(x) dx -\cl{0}{\infty}\!\!a(x)(x)w_m(x)dx
+
\cl{0}{\infty}[B_mu](x)w_m(x)d x\\
&\phantom{xxxxxxx}= -\cl{0}{\infty}\!\!\p_x(r(x)u(x))w_m(x) dx
-\cl{0}{\infty}(N_0(x)+N_m(x))a(x)u(x)dx,
\end{align*}
where  we used (\ref{Bm}) and \eqref{Nm} to  obtain  the last line.
Also, for $0<x_0<x_1<\infty$
\begin{equation}
\cl{x_0}{x_1}\p_x(r(x)u(x))w_m(x) d x=
r(x_1)u(x_1)w_m(x_1)-r(x_0)u(x_0)w_m(x_0) - m\cl{x_0}{x_1}r(x)u(x)x^{m-1}d x.
\label{intx}
\end{equation}
The integral on the LHS converges by the definition of $D(Z_{\beta,m})$ and on the RHS converges by \eqref{RRe}.
By \eqref{bcb}, we have
\[
\lim\limits_{x_0\to 0^+} r(x_0)u(x_0)=\cl{0}{\infty}\beta(x)u(x)dx.
\]
and, since the limit is finite,
\[
\lim\limits_{x_0\to 0^+} r(x_0)u(x_0)(1+x_0^m) = \lim\limits_{x_0\to 0^+} r(x_0)u(x_0)
+ \lim\limits_{x_0\to 0^+} r(x_0)u(x_0)x_0^m = \cl{0}{\infty}\beta(x)u(x)dx.
\]
Thus, there exists
\[
\lim\limits_{x_1\to \infty}r(x_1)u(x_1)(1+x_1^m)= L\geq 0.
\]
If $L>0$, then there is $0<L'<L$ such that $r(x)u(x)w_m(x)\geq L'$ for large $x$, but then
 $u\notin X_m$. Thus
\begin{equation}
\lim\limits_{x_1\to \infty}r(x_1)u(x_1)w_m(x_1)= 0,
\label{rinf}
\end{equation}
and \eqref{subfuncta'} is proved.
\end{proof}

\begin{proposition}\label{prop10}
Assume \eqref{RRe}, \eqref{aa},  \eqref{aa1}, \eqref{nm1} and \eqref{beeta} are satisfied.
Then $(S_{K_{\beta,m}}(t))_{t\geq 0}$ is a positive quasi-contractive $C_0$-semigroup with type not exceeding
\begin{equation}
\omega_{\beta,m} :=  \beta_m+\omega_{r,m} +4a_0b_0.
 \label{omegamin}
 \end{equation}
 \end{proposition}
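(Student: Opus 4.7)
The plan is to exploit Lemma~\ref{est10} directly by estimating each of the three integrals in the identity $\int_0^\infty [(Z_{\beta,m}+B_m)u](x)w_m(x)dx = -c_m(u)$ separately for $0\leq u\in D(Z_{\beta,m})$, obtain the bound $-c_m(u)\le\omega_{\beta,m}\|u\|_m$, and then convert this dissipativity-type estimate into the claimed quasi-contractivity using positivity of the semigroup.

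For the first term in $-c_m(u)$, the pairing $\int_0^\infty \beta(x)u(x)dx$ is bounded by $\beta_m\|u\|_m$ directly from the definition of $\|\beta\|_m^*$ in \eqref{infy} and assumption \eqref{beeta}. For the second term, I will use \eqref{RRe} to write $r(x)x^{m-1}\le r_0(x^{m-1}+x^m)$ and verify, by separating the cases $x\le 1$ and $x>1$ with $m\ge 1$, that $x^{m-1}+x^m\le 2(1+x^m)=2w_m(x)$; consequently $m\int_0^\infty r(x)u(x)x^{m-1}dx\le 2mr_0\|u\|_m=\omega_{r,m}\|u\|_m$. For the third term, the decomposition $N_0(x)+N_m(x)$ splits into a piece with $N_m\ge 0$ (by \eqref{NM} for $m\ge 1$), which contributes a nonpositive amount after the minus sign, and a piece with $-N_0(x)=n_0(x)-1\le n_0(x)\le b_0(1+x^l)$ by \eqref{n0}. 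Combining with \eqref{aa1} gives $-N_0(x)a(x)\le a_0b_0(1+x^p)(1+x^l)=a_0b_0(1+x^l+x^p+x^{l+p})$, and assumption \eqref{massum} guarantees $l,p,l+p\le m$, so each monomial is controlled by $w_m$. This yields $-N_0(x)a(x)\le 4a_0b_0\,w_m(x)$ and hence $-\int_0^\infty(N_0+N_m)au\,dx\le 4a_0b_0\|u\|_m$. Adding the three bounds produces
\[
-c_m(u)\le(\beta_m+\omega_{r,m}+4a_0b_0)\|u\|_m=\omega_{\beta,m}\|u\|_m,\qquad 0\le u\in D(Z_{\beta,m}).
\]

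To pass from this estimate to quasi-contractivity of $(S_{K_{\beta,m}}(t))_{t\ge 0}$, I will use that for $u\in D(K_{\beta,m})\cap X_{m,+}$ the orbit $S_{K_{\beta,m}}(t)u$ lies in $D(K_{\beta,m})_+$ by Theorem~\ref{LIN2}, so
\[
\frac{d}{dt}\|S_{K_{\beta,m}}(t)u\|_m=\int_0^\infty [K_{\beta,m}S_{K_{\beta,m}}(t)u](x)w_m(x)dx\le\omega_{\beta,m}\|S_{K_{\beta,m}}(t)u\|_m,
\]
and Gronwall's inequality delivers $\|S_{K_{\beta,m}}(t)u\|_m\le e^{\omega_{\beta,m}t}\|u\|_m$. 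Density of $D(K_{\beta,m})_+$ in $X_{m,+}$ extends the estimate to all nonnegative data, and for an arbitrary $u\in X_m$ the standard positivity bound $|S_{K_{\beta,m}}(t)u|\le S_{K_{\beta,m}}(t)|u|$ yields $\|S_{K_{\beta,m}}(t)u\|_m\le e^{\omega_{\beta,m}t}\|u\|_m$, which is precisely quasi-contractivity with type at most $\omega_{\beta,m}$.

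The main obstacle is the purely computational step of bookkeeping in the third term: matching the constant exactly to $4a_0b_0$ requires that the four monomials $1,x^l,x^p,x^{l+p}$ arising from $(1+x^p)(1+x^l)$ are each uniformly dominated by $w_m$, which is the precise role of assumption \eqref{massum}; without the careful split into the cases $l+p\le 1$ (where only $m>1$ is assumed) and $l+p>1$ (where $m\ge l+p$ is required) this monomial-by-monomial domination would fail at either $0$ or $\infty$.
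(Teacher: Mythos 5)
Your estimates for the three terms in $-c_m(u)$ are identical to the paper's: $\int_0^\infty\beta u\,dx\le\beta_m\|u\|_m$ from \eqref{infy} and \eqref{beeta}; $m\int_0^\infty ru\,x^{m-1}dx\le 2mr_0\|u\|_m=\omega_{r,m}\|u\|_m$ via $r(x)x^{m-1}\le r_0(x^{m-1}+x^m)\le 2r_0 w_m(x)$; and $-\int_0^\infty(N_0+N_m)au\,dx\le 4a_0b_0\|u\|_m$ via $N_m\ge 0$, $-N_0\le n_0$, and $\frac{(1+x^p)(1+x^l)}{1+x^m}\le 4$ for $m\ge l+p$. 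So the first half of your argument matches the paper exactly (modulo an apparent typo $\beta_\infty$ for $\beta_m$ in the printed proof). Where you diverge is the conversion of the dissipativity-type estimate into a growth bound. The paper invokes \cite[Proposition 9.29]{JA} to construct a minimal extension $\widetilde{K}_{\beta,m}$ of $Z_{\beta,m}+B_m$ which automatically generates a positive quasi-contractive semigroup of type $\le\omega_{\beta,m}$, and then identifies $\widetilde{K}_{\beta,m}=K_{\beta,m}$ by comparing the Dyson--Phillips type resolvent expansion from \cite[Theorem 5.2]{JA} with \eqref{reseq}. You instead take the semigroup from Theorem~\ref{LIN2} as given and derive the bound post hoc: you differentiate $t\mapsto\|S_{K_{\beta,m}}(t)u\|_m=\langle w_m, S_{K_{\beta,m}}(t)u\rangle$ for $u\in D(K_{\beta,m})_+$ (legitimate, since $w_m\in X_m^*$ and orbits stay positive and differentiable), apply Gronwall, and extend by density of $D(K_{\beta,m})_+$ in $X_{m,+}$ and the lattice inequality $|S(t)u|\le S(t)|u|$. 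Both routes are correct. Your argument is shorter and leans on the generation already obtained via the Miyadera--Desch theorem; the paper's route is more self-contained in that it produces an independent Kato--Voigt generation and only afterwards uses \eqref{reseq} to identify the two semigroups, which provides a consistency check that your approach does not.
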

\begin{proof}
For $u \in D(Z_{\beta,m})_+$, by \eqref{subfuncta'}, (\ref{Nm}), \eqref{RRe} and \eqref{aa1},
\begin{align*}
-c_m(u)&\leq  \cl{0}{\infty}\beta(x)u(x)dx +
 m\cl{0}{\infty}r(x)u(x)x^{m-1} {d}x
\!-\cl{0}{\infty}N_0(x)a(x)u(x) {d}x -\cl{0}{\infty}N_m(x)a(x)u(x) {d}x \\
&\leq \beta_m \|u\|_{m} +r_0 m \cl{0}{\infty}u(x)(1+x)x^{m-1} \md x
+ 4a_0b_0 \| u\|_m-\cl{0}{\infty}N_m(x)a(x)u(x) {d}x\\
&\leq (\beta_{\infty} + 2 m r_0 + 4a_0b_0)\|u\|_{m}-\cl{0}{\infty}N_m(x)a(x)u(x) {d}x,
\end{align*}
where we used
\[
0\leq -\frac{N_0(x)a(x)}{1+x^m} \leq \frac{n_0(x)a(x)}{1+x^m}\leq a_0b_0\frac{(1+x^p)(1+x^l)}{1+x^m}
\leq 4a_0b_0,
\]
for $m\geq p+l$, see \cite[Lemma 5.1.46]{JWL}.
Then, by e.g., \cite[Proposition 9.29]{JA}, there is a minimal extension $\ti K_{\beta,m}$ of $Z_{\beta,m}+B_m$
generating a positive quasi-contractive semigroup, say \sem{S_{\ti K_{\beta,m}}}, with the growth rate not exceeding  $\omega_{\beta,m}$.
Since, by \cite[Theorem 5.2]{JA},
\[
R(\la, \ti K_{\beta,m}) = \sum\limits_{n=0}^{\infty} R(\la,Z_{\beta,m})(B_mR(\la,Z_{\beta,m}))^n,
\quad \la > \omega_{\beta,m},
\]
and, using \eqref{reseq}, we see  that $K_{\beta,m} = \ti K_{\beta,m}$, \sem{S_{\ti K_{\beta,m}}}=\sem{S_{K_{\beta,m}}} and hence the latter is quasi-contractive.
\end{proof}

\subsection{A characterization of $(S_{K_{\beta,m}}(t))_{t\ge 0}$}\label{subsecCM}

By virtue of Lemma~\ref{lmproj} and subsequent Theorems~\ref{thtranspgen} and
\ref{LIN2}, $((K_{\beta,m}, D(Z_{\beta,m}))$ is a multiplicative
perturbation of $(K_{0,m}, D(Z_{0,m}))$, where the multipliers
$(I+E_\lambda)$ and $(I+E_\lambda)^{-1}$ are compact perturbations of the identity.
This fact, combined with the approach from \cite[Section 6.4]{Klaus1}, yields the following
characterization of $(S_{K_{\beta,m}})_{t\ge0}$, with $\beta\neq 0$.

\begin{theorem}\label{thpert}
Assume that \eqref{RRe}, \eqref{aa1} and \eqref{n0}, with \eqref{massum}, are satisfied. Additionally, let
\begin{subequations}\label{betaas}
\begin{equation}
\beta \in X^*_k, \quad k+p\leq m,\label{betaas1}
\end{equation}
\begin{equation}
\p_x \beta \in X^*_{m-1}.\label{betaas2}
\end{equation}
\end{subequations}
Then $(S_{K_{\beta,m}}(t))_{t\ge0}$ is similar to a compact perturbation of $(S_{K_{0,m}}(t))_{t\ge0}$, i.e.,
\begin{equation}\label{eqsim}
(I+E_\lambda)^{-1}S_{K_{\beta,m}}(t)(I+E_\lambda) - S_{K_{0,m}}(t) \in \mathcal{K}(X_m),\quad t\ge 0.
\end{equation}
\end{theorem}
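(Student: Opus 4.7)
The plan is to exploit the multiplicative-perturbation structure supplied by Lemma~\ref{lmproj} and then close via a Dyson--Phillips argument. Fix $\lambda>\omega_{r,m}+\beta_m$ and set $W:=I+E_\lambda$, so that $W^{-1}=I-e_\lambda\langle\beta,\cdot\rangle$ by \eqref{projpropa}; both $W-I$ and $W^{-1}-I$ are rank-one operators on $X_m$. By \eqref{projpropb}, $W$ maps $D(Z_{0,m})$ bijectively onto $D(Z_{\beta,m})$, hence $\tilde S(t):=W^{-1}S_{K_{\beta,m}}(t)W$ is a strongly continuous semigroup on $X_m$ whose generator $\tilde K$ is $W^{-1}K_{\beta,m}W$ on the domain $D(Z_{0,m})$. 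The aim is to show that $\tilde K$ differs from $K_{0,m}$ by a compact (in fact finite-rank) bounded operator.

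A direct differentiation of $r(x)e_\lambda(x)=e^{-\lambda R(x)-Q(x)}$ gives $\mathcal{Z}e_\lambda=\lambda e_\lambda$, hence $\mathcal{Z}(E_\lambda u)=\lambda E_\lambda u$. Consequently, for $u\in D(Z_{0,m})$ one obtains
\begin{equation*}
K_{\beta,m}Wu = K_{0,m}u+(\lambda I+B_m)E_\lambda u,
\end{equation*}
and, applying $W^{-1}$,
\begin{equation*}
\tilde Ku = K_{0,m}u+Cu,\qquad Cu := W^{-1}(\lambda I+B_m)E_\lambda u - e_\lambda\langle\beta,K_{0,m}u\rangle.
\end{equation*}
Since $e_\lambda\in D(A_m)$ (by taking $a=0,\,b=\infty$ in \eqref{pm2}) and $E_\lambda$ has rank one, the first summand already extends to a rank-one bounded operator on $X_m$; only the second summand requires further work.

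The hard step, and the main purpose of hypotheses \eqref{betaas1}--\eqref{betaas2}, is to show that $u\mapsto\langle\beta,K_{0,m}u\rangle$ extends to an element of $X_m^*$. Splitting $K_{0,m}=Z_{0,m}+B_m$, an integration by parts on $D(Z_{0,m})$ yields
\begin{equation*}
\langle\beta,Z_{0,m}u\rangle = \int_0^\infty\bigl(r(x)\beta'(x)-a(x)\beta(x)\bigr)u(x)\,dx,
\end{equation*}
the boundary term at $0$ vanishing by \eqref{bc0} and the one at $\infty$ by the decay argument used in the proof of Lemma~\ref{est10} combined with $\beta\in X_k^*$ and $k\le m$; the pointwise bounds \eqref{RRe}, \eqref{aa1}, \eqref{betaas1}, \eqref{betaas2} then force $r\beta'-a\beta\in X_m^*$. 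A Fubini exchange similarly gives $\langle\beta,\mathcal{B}u\rangle=\langle a\tilde\beta,u\rangle$ with $\tilde\beta(y):=\int_0^y\beta(x)b(x,y)\,dx$, and \eqref{aa1}, \eqref{n0}, \eqref{betaas1} force $a\tilde\beta\in X_m^*$. Thus $C$ extends to a bounded, finite-rank, and hence compact, operator on $X_m$.

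With $C\in\mathcal{K}(X_m)$, the Dyson--Phillips identity
\begin{equation*}
\tilde S(t) = S_{K_{0,m}}(t)+\int_0^t S_{K_{0,m}}(t-s)\,C\,\tilde S(s)\,ds
\end{equation*}
closes the argument: compactness of $C$ together with the strong continuity of both semigroups makes the integrand operator-norm continuous on $[0,t]$, so the Bochner integral is a norm limit of finite sums of compact operators and is therefore itself compact. This is precisely \eqref{eqsim}.
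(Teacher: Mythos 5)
Your argument follows the same route as the paper's: use the multiplicative perturbation structure of Lemma~\ref{lmproj} to transport $K_{\beta,m}$ to $\tilde K = W^{-1}K_{\beta,m}W = K_{0,m} + C$ on $D(Z_{0,m})$, show $C$ extends to a bounded finite-rank (hence compact) operator, and then conclude at the semigroup level. Your computation of $C$ matches \eqref{sim_1} (your $\langle\beta,Z_{0,m}u\rangle=\int(r\beta'-a\beta)u\,dx$ is in fact the sign-corrected version of the paper's display), and the treatment of the two boundary terms and of $\langle\beta,B_mu\rangle$ via \eqref{betaas} is the same as in part (b) of the paper's proof.

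There is, however, one genuine gap in the final step. You assert that compactness of $C$ together with strong continuity of the two semigroups makes $s\mapsto S_{K_{0,m}}(t-s)\,C\,\tilde S(s)$ operator-norm continuous. This is not justified and is in general false: for a compact $C$ on a Banach space, $s\mapsto S(s)C$ \emph{is} norm continuous (because $C$ maps the unit ball into a precompact set on which strong convergence is uniform), but $s\mapsto C\,\tilde S(s)$ need not be. Indeed, writing the finite-rank $C=\sum_i g_i\otimes\phi_i$ gives $C\tilde S(s)=\sum_i g_i\otimes\tilde S(s)^*\phi_i$, and $s\mapsto\tilde S(s)^*\phi_i$ is only weak-$*$ continuous in $X_m^*\cong L_\infty$; it is norm continuous only when $\phi_i$ lies in the sun dual, which you have not verified. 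So as stated, the norm continuity of the integrand does not follow.

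The conclusion is nevertheless correct and the repair is standard. Either use the other Duhamel form
\[
\tilde S(t) - S_{K_{0,m}}(t) = \int_0^t \tilde S(t-s)\,C\,S_{K_{0,m}}(s)\,ds,
\]
whose left factor $s\mapsto\tilde S(t-s)C$ is norm continuous, or, keeping your formula, approximate by Riemann sums
\[
V_n = \sum_{k} S_{K_{0,m}}(t-s_k)\,C\int_{s_k}^{s_{k+1}}\tilde S(s)\,ds,
\]
which are compact, and estimate $\|V-V_n\|$ using only the (valid) norm continuity of $\sigma\mapsto S_{K_{0,m}}(\sigma)C$ and the local boundedness of $\tilde S$. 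Either fix closes the argument; the paper avoids the issue by directly citing the corresponding bounded-perturbation result in \cite[Chapter II, Section 2.1]{Klaus1}.
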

\begin{proof}
(a) On the account of \eqref{projprop}, the elementary identity
$(\lambda I-Z_m)e_\lambda = 0$ and the inclusions $D(Z_{0,m}), D(Z_{\beta,m}) \subset D(A_m)$, for
$\lambda>\omega_{r,m}+\beta_m$, we have
\begin{equation}\label{sim_1}
\begin{split}
&(I+E_\lambda)^{-1}(\lambda I-K_{\beta,m})(I+E_\lambda)f\\
&\qquad\qquad
= (\lambda I-K_{0,m})f  + e_\lambda\langle \beta, K_{0,m}f\rangle \\
&\qquad\qquad
- \bigl[\lambda e_\lambda\langle\beta,f\rangle + (I+E_\lambda)^{-1}B_m E_\lambda f\bigr],
\quad f \in D(Z_{0,m}).
\end{split}
\end{equation}
Note that $e_\lambda\in D(A_m)$, therefore, by virtue of \eqref{eqproja} and \eqref{pm2}, the rank-one operator
$\bigl[\lambda e_\lambda\langle\beta,f\rangle + (I+E_\lambda)^{-1}B_m E_\lambda f\bigr]$
is bounded in $X_m$ and hence, is compact.

(b) We let $D_{0, m} := e_\lambda\langle \beta, K_{0,m} \cdot\rangle$.
In view of \eqref{betaas}, for $f\in D(Z_{0,m})$, we have
\begin{align*}
\langle \beta,B_mu\rangle &= \cl{0}{\infty}a(y)u(y) \left(\cl{0}{y}b(x,y)\beta(x) dx\right)dy
\leq \beta_k \cl{0}{\infty}a(y)u(y) \left(\cl{0}{y}b(x,y)(1+x^k)dx\right)dy \\
&\leq 2\beta_kb_0a_0 \cl{0}{\infty}u(y)(1+y^m)dy =  2\beta_kb_0a_0\|u\|_m
\end{align*}
and
\begin{align*}
\langle\beta,Z_{0,m}u\rangle &= \cl{0}{\infty} \beta(x) \p_x(r(x)u(x) dx - \cl{0}{\infty} \beta(x)a(x)u(x)dx\\
&= \lim\limits_{x_1\to \infty} \beta(x_1)r(x_1)u(x_1)- \lim\limits_{x_0\to 0^+} \beta(x_0)r(x_0)u(x_0) \\
&- \cl{0}{\infty} \p_x\beta(x) r(x)u(x)dx - \cl{0}{\infty} \beta(x)a(x)u(x)dx\\
&= - \cl{0}{\infty} \p_x\beta(x) r(x)u(x)dx - \cl{0}{\infty} \beta(x)a(x)u(x)dx,
\end{align*}
where \eqref{bc0} and \eqref{rinf} (as $k<m$) are used to show that both limits vanish.
The estimates show that $D_m$ extends to a bounded rank-one (and hence compact)
operator $\bar{D}_m$ in $X_m$.
Thus, from \eqref{sim_1} it follows that $(K_{\beta,m}, D(Z_{\beta,m}))$ is similar to a compact perturbation of
$(K_{0,m}, D(Z_{0,m}))$ and hence, by \cite[Chapter II, Section 2.1]{Klaus1}, generates a semigroup
$(S_1(t))_{t\ge 0}$ satisfying
\[
(I+E_\lambda)^{-1}S_1(t)(I+E_\lambda) - S_{K_{0,m}}(t) \in \mathcal{K}(X_m),\quad t\ge 0.
\]
By the uniqueness of the generator, $(S_1(t))_{t\ge 0}$ must coincide with
$(S_{K_{\beta,m}}(t))_{t\ge 0}$ of Theorem~\ref{LIN2} and \eqref{eqsim} follows.
\end{proof}


\section{Irreducibility of the semigroup}\label{secsg}

An important part in the existence of the spectral gap of a semigroup is played by its irreducibility.
In contrast to the pure fragmentation semigroup, which is not irreducible, here we have an interplay
of the growth mechanism and fragmentation, which reduces the size of particles, and these antagonistic processes, under
natural assumptions on the fragmentation rate $a$ and the daughter particles distribution function $b$,
yield irreducibility of the solution semigroup.

The irreducibility can be ensured by two mechanisms. In general, if we allow $a$ to be zero over some set $I$
(see Remark \ref{rem10}) then, to make sure that there is sufficient flow "downward", we must assume
that the particles that can split, produce daughter particles of sufficiently small sizes so that the process will not be
confined to sizes bigger than that in $I$. This can follow if the gain of particles of smaller sizes, entering the ensemble
after splitting of a $y$-size particle due to the fact that $b(x,y)>0$ for some $x<y$, is sufficiently uniform
to eventually fill up $\mbb R_+$. Otherwise, this can be also ensured by a nonzero $\beta,$ which generates
particles of "zero" size entering the system out from particles of sizes $y \in {\rm supp}\,\beta$.   This corresponds
to classical results from McKendrick--von Foerster equation in population theory, where the model fails to be irreducible
if the old (in this case large) individuals cannot reproduce, see e.g., \cite[Theorem 5.2]{ian}.

To make these physical intuitions precise, we have to introduce some notation. In this case, as a consequence
of \eqref{nm1}, we see that for any $y>0$
\begin{equation}
\emptyset\neq \mathrm{supp}\,b(\cdot,y)\subset [0,y].
\label{suppb}
\end{equation}
Here and elsewhere in the paper, for a measurable function $f$, by $\mathrm{supp}\,f$ we understand the
essential support of $f$, that is, the complement of the set on which $f$ is almost everywhere equal to 0 and by
$\sup$ and $\inf$ we understand the essential supremum and infimum.  Then, for any $y\in\mbb R_+$, we define
$$
\mc b(y) = \inf \mathrm{supp}\,b(\cdot,y).
$$
Note that, in agreement with Remark \ref{rem10},  we decided to use $b$ rather than $ab$ here, as then $\mc b(y) = y$
for $y \notin \mc A:=\mathrm{supp}\, a$ is well-defined, whereas supp\,$ab$ is empty for such $y$s. The function $\mc b$ satisfies
$$
0\leq \mc b(y)<y,
$$
for any $y\in \mc A$.  Next, for any $z>0,$ we define
$$
\mc c(z) = \inf\limits_{y\geq z} \mc b(y).
$$
Then also
$$
0\leq \mc c(z)\leq z.
$$
For any $z_0>0,$ the sequence $(z(n,z_0))_{n\geq 1} = (\mc c^{(n)}(z_0))_{n\geq 1}$, where
$ \mc c^{(n)}(y_0) := \underbrace{\mc c(\mc c(...\mc c(z_0)))}_{n\,\rm times}$ is nonincreasing and bounded
from below and thus has a limit, say $\mc c_\infty(z_0)$. Let
$$
\bar{\mc c} :=\sup_{z_0\in \mbb R_+}\{\mc c_\infty(z_0)\}.
$$
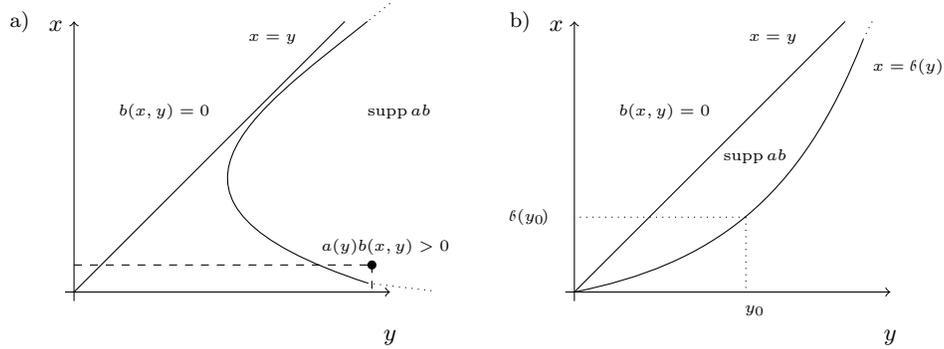
\begin{figure}
\begin{center}
\begin{tikzpicture}[scale=1.2]
\draw (-2.6,3) node {a)};
\draw[thin,->] (-2.1,0)--(1.5,0);
\draw[thin,->] (-2,-0.1)--(-2,3);
\draw[thin,-] (-2,0)--(1,3);
\draw (1.5,-0.5) node {\small{$y$}};
\draw (-2.2,2.95)  node {\small{$x$}};
\draw (-1,2) node {\tiny{$b(x,y)=0$}};
\draw (0.2,2.8) node {\tiny{$x=y$}};
\draw [thin,dotted](1.5,3.2)--(1.25,3);
\draw (1.25,3)..controls (0,2) and (-1.5,1)..(1.25,0.1);
\draw [thin,dotted](1.25,0.1)--(2,0.01);
\draw [thin,dashed] (-2,0.3)--(1.3,0.3);
\draw [thin,dashed] (1.3,0.3)--(1.3,0);
\fill (1.3,0.3) circle [radius=0.05];
\draw (1.45,0.5) node {\tiny{$a(y)b(x,y)>0$}};
\draw (1.6,2) node {\tiny{${\rm supp}\,ab$}};
\end{tikzpicture}\qquad
\begin{tikzpicture}[scale=1.2]
\draw (-2.6,3) node {b)};
\draw[thin,->] (-2.1,0)--(1.5,0);
\draw[thin,->] (-2,-0.1)--(-2,3);
\draw[thin,-] (-2,0)--(1,3);
\draw (-2,0)..controls (-0.5,0.3) and (0.5,1)..(1.2,2.8);
\draw [dotted] (1.2,2.8)--(1.3,3);
\draw (0,1.5) node {\tiny{${\rm supp}\,ab$}};
\draw [thin,dotted] (-0.1,0.83)--(-0.1,0);
\draw [thin,dotted] (-0.1,0.83)--(-2,0.83);
\draw (-2.5,0.83) node {\tiny{$\mc b(y_0\!)$}};
\draw (0,-0.2) node {\tiny{$y_0$}};
\draw (1.5,-0.5) node {\small{$y$}};
\draw (-2.2,2.95) node {\small{$x$}};
\draw (-1,2) node {\tiny{$b(x,y)=0$}};
\draw (1.7,2.5) node {\tiny{$x=\mc b(y)$}};
\draw (0.2,2.8) node {\tiny{$x=y$}};
\end{tikzpicture}
\caption{\footnotesize{Illustration of two cases of supp$\,ab$. On the left, supp$\,ab$ extends to infinity
for any $x>0$, as in Remark \ref{exMKJB}.(a). On the right,  supp$\,ab$ allows to reach $x=0$ by iterations.}}\label{fig1a}
\end{center}
\end{figure}
\begin{lemma}\label{lem2}
 \mbox{}\\
1. If $\mc c(\bar{\mc c})\geq \bar{\mc c}$, then for any $z\geq \bar{\mc c}$ we have $\mc c_\infty (z) = \bar{\mc c}$.\\
2. If $\mc c(\bar{\mc c})< \bar{\mc c}$, then for any $z > \bar{\mc c}$, $\mc c(z)>\bar{\mc c}$  and
$\mc c_\infty (z) = \bar{\mc c}$.\\
Hence,
$$
\bar{\mc c} = \max_{z\in \mbb R_+}\{\mc c_\infty(z)\}
$$
and, in particular, $\bar{\mc c}$ is isolated in $\{\mc c_\infty(z)\}_{z\in \mbb R_+}$
and it can be only approached by sequences $(\mc c^{(n)}(z))_{n\geq 1}$, $z\geq \bar{\mc c},$ from above.
\end{lemma}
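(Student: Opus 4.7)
Two properties of $\mc c$, read off directly from the definitions, drive the argument: $\mc c$ is nondecreasing (as $z$ grows the set over which the essential infimum runs shrinks), and $\mc c(z)\le z$ for every $z>0$ (since $\mc b(y)\le y$ a.e.). Combined with the monotone convergence $\mc c^{(n)}(z_0)\searrow \mc c_\infty(z_0)$ that is built into the definition of $\mc c_\infty$, these are all the ingredients needed.

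\textbf{Proof of 1.} The hypothesis $\mc c(\bar{\mc c})\ge\bar{\mc c}$ together with $\mc c(\bar{\mc c})\le\bar{\mc c}$ forces $\mc c(\bar{\mc c})=\bar{\mc c}$, so $\bar{\mc c}$ is a fixed point of $\mc c$. For any $z\ge\bar{\mc c}$, monotonicity yields $\mc c(z)\ge\mc c(\bar{\mc c})=\bar{\mc c}$, and a straightforward induction gives $\mc c^{(n)}(z)\ge\bar{\mc c}$ for every $n$; passing to the limit, $\mc c_\infty(z)\ge\bar{\mc c}$. The opposite bound is immediate from the defining supremum, so $\mc c_\infty(z)=\bar{\mc c}$.

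\textbf{Proof of 2 -- the main step.} The core is to show that $\mc c(z)>\bar{\mc c}$ for every $z>\bar{\mc c}$; the rest follows by the same inductive scheme as in part 1. I plan to argue by contradiction: suppose $\mc c(z_0)\le\bar{\mc c}$ for some $z_0>\bar{\mc c}$. By monotonicity, $\mc c(w)\le\bar{\mc c}$ for every $w\in(0,z_0]$. Pick an arbitrary $z>0$ and track the iterates $\mc c^{(n)}(z)$. They must eventually fall into $[0,z_0]$, for otherwise $\mc c^{(n)}(z)>z_0$ for all $n$ would give $\mc c_\infty(z)\ge z_0>\bar{\mc c}$, contradicting $\bar{\mc c}=\sup_{z_0}\mc c_\infty(z_0)$. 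Once some iterate lies in $[0,z_0]$, the next one is at most $\mc c(z_0)\le\bar{\mc c}$ by monotonicity and the standing assumption, and a further application of $\mc c$ drops it to at most $\mc c(\bar{\mc c})<\bar{\mc c}$ by the case-2 hypothesis. Hence $\mc c_\infty(z)\le\mc c(\bar{\mc c})<\bar{\mc c}$ uniformly in $z$, contradicting $\sup_z\mc c_\infty(z)=\bar{\mc c}$. The delicate point here -- which I expect to be the main obstacle -- is the uniform two-step collapse of the orbit through the regions $(\bar{\mc c},z_0]$ and $(0,\bar{\mc c}]$, regardless of the starting point. With $\mc c(z)>\bar{\mc c}$ secured for all $z>\bar{\mc c}$, applying the same implication to $\mc c^{(n)}(z)$ gives $\mc c^{(n)}(z)>\bar{\mc c}$ for all $n$ by induction, whence $\mc c_\infty(z)\ge\bar{\mc c}$, and the reverse bound from the sup yields equality.

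\textbf{Closing assertions.} The identity $\bar{\mc c}=\max$ is now immediate: in case 1 the supremum is realized at $z=\bar{\mc c}$ itself, and in case 2 at every $z>\bar{\mc c}$. For $z<\bar{\mc c}$ the bound $\mc c_\infty(z)\le z<\bar{\mc c}$ shows that $\bar{\mc c}$ cannot be reached by any iterate sequence that remains below $\bar{\mc c}$; it can only be approached by sequences $(\mc c^{(n)}(z))_n$ started at some $z\ge\bar{\mc c}$, every term of which lies in $[\bar{\mc c},\infty)$, giving the ``approach from above'' assertion and, in turn, the isolation claim.
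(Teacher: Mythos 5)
Your proof of parts 1 and 2 is correct and follows the same route as the paper (monotonicity of $\mc c$, the bound $\mc c(z)\le z$, and the defining supremum), but your contradiction argument in part 2 is actually sharper than the paper's. The paper concludes that no $x$ satisfies $\mc c_\infty(x)=\bar{\mc c}$ and then declares this contrary to ``the definition of $\bar{\mc c}$,'' but since $\bar{\mc c}$ is a supremum, nonattainment alone is not yet a contradiction. What actually closes the argument is the uniform gap you make explicit: every orbit must eventually enter $(0,z_0]$ (else its limit would exceed $\bar{\mc c}$), and a two-step application of $\mc c$ then forces $\mc c_\infty(z)\le\mc c(\bar{\mc c})<\bar{\mc c}$ for every $z>0$, which bounds the whole family strictly below $\bar{\mc c}$ and genuinely contradicts the supremum. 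This is the mechanism the paper's proof implicitly relies on but does not articulate, and spelling it out is an improvement.

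One flag on the closing assertions: the derivation of ``approach from above'' is fine, but the step ``and, in turn, the isolation claim'' is quick. In case 2 the isolation in the value set $\{\mc c_\infty(z)\}_{z\in \mbb R_+}$ does follow, because $\mc c_\infty(z)\le\mc c(\bar{\mc c})<\bar{\mc c}$ uniformly for $z\le\bar{\mc c}$ produces a gap below $\bar{\mc c}$. In case 1, where $\mc c(\bar{\mc c})=\bar{\mc c}$, no such uniform gap is established; you only have $\mc c_\infty(z)\le z$ pointwise, which does not prevent the values $\{\mc c_\infty(z)\}_{z<\bar{\mc c}}$ from accumulating at $\bar{\mc c}$. ``Orbits converge to $\bar{\mc c}$ only from above'' is a statement about the iterate sequences, not about the set of limit values, so it does not by itself give topological isolation. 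The paper's own proof is equally terse on this point, so the proposal is at parity with the source, but if you want the stated isolation conclusion to be airtight you would need a separate gap estimate (or an explicit caveat) covering case 1.
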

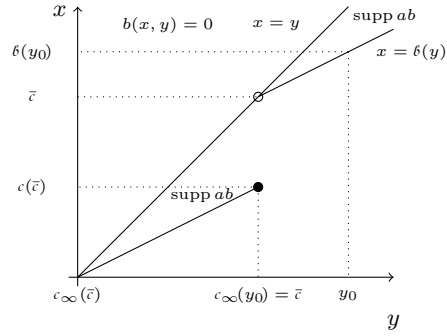
\begin{figure}
\begin{tikzpicture}[scale=1.2]
\draw[thin,->] (-2.1,0)--(1.5,0);
\draw[thin,->] (-2,-0.1)--(-2,3);
\draw[thin,-] (-2,0)--(1,3);
\draw[scale=1, domain=-2:0, smooth, variable=\x, black] plot ({\x}, {0.5*(\x+2))});
\draw[scale=1, domain=0:1.5, smooth, variable=\x, black] plot ({\x}, {0.5*(\x+2) +1)});
\draw (1.5,-0.5) node {\small{$y$}};
\draw (-2.2,2.95) node {\small{$x$}};
\draw (-1,2.8) node {\tiny{$b(x,y)=0$}};
\draw (0.2,2.8) node {\tiny{$x=y$}};
\draw (1.7,2.5) node {\tiny{$x=\mc b(y)$}};
\draw (0,1) circle  [radius=0.05];
\fill (0,1) circle  [radius=0.05];
\draw (0,2) circle  [radius=0.05];
\draw [dotted] (-2,2)--(0,2);
\draw [thin,dotted] (1,2.5)--(1,0);
\draw [thin,dotted] (1,2.5)--(-2,2.5);
\draw (-2.5,2.5) node {\tiny{$\mc b(y_0\!)$}};
\draw (1,-0.2) node {\tiny{$y_0$}};
\draw [thin,dotted] (0,1)--(0,0);
\draw (0,-0.2) node {\tiny{$\mc c_\infty\!(y_0\!)=\bar{\mc c}$}};
\draw [thin,dotted] (0,1)--(-2,1);
\draw (-2.5,2) node {\tiny{$\bar{\mc c}$}};
\draw (-2.5,1) node {\tiny{$\mc c(\bar{\mc c})$}};
\draw (-2,-0.2) node {\tiny{$\mc c_\infty(\bar{\mc c})$}};
\draw (-0.62,0.91) node {\tiny{${\rm supp}\,ab$}};
\draw (1.4,2.9) node {\tiny{${\rm supp}\,ab$}};
\end{tikzpicture}
\caption{The case of Lemma \ref{lem2}.(2). Here $\mc c_\infty(\bar {\mc c}) <\bar{\mc c}$ and  $\mc c_\infty(z) =\bar{\mc c}$ for any $z>\bar{\mc c}$. }\label{fig1c}
\end{figure}
\begin{proof}
In case 1., first we observe that the assumption, and $\mc b(z)\leq z,$ implies $\mc c(\bar{\mc c}) = \bar{\mc c}$.
Further, properties of infimum imply $\mc c(z_1)\leq \mc c(z_2)$, provided $z_1\leq z_2$. From this, it follows that
for any $z\geq \bar{\mc c}$, we have
$$
\mc c^{(n)}(z)\geq \mc c^{(n)}(\bar{\mc c}) = \bar{\mc c},
$$
and hence, $\mc c_\infty(z) \geq \bar{\mc c}$. From the definition of $\bar{\mc c}$, $\mc c_\infty(z) = \bar{\mc c}$.

In case 2., assume that for some $z>\bar{\mc c}$, we have $\mc c(z) \leq \bar{\mc c}$. Then for all $y\leq z$
it holds $\mc c(y) \leq \bar{\mc c}$ and, iterating and using the assumption,
$\mc c^{(n)} (z) <\bar{\mc c}$ for all $n$.  By monotonicity,  $\mc c_\infty(z) <\bar{\mc c}$. Using this observation,
we see that it is impossible to have $c_\infty(x) = \bar{\mc c}$ for any $x>z$. Indeed,  again using the
monotonicity, we would have $\mc c^{(n)}(x) \in [\bar{\mc c},z]$ for sufficiently large $n$. But then, from the first part,
$\mc c_\infty(x) < \bar{\mc c}$, contradicting the hypothesis. Summarizing, by the definition of $\bar{\mc c}$, the only
possible situation in this case is $\mc c(\bar{\mc c})< \bar{\mc c}$ and $\mc c(z)> \bar{\mc c}$ for $z>\bar{\mc c}$,
see Fig. \ref{fig1c}. If we take any such $z$, then $\bar c< \mc c(z)\leq z$, and we see that $(\mc c^{(n)}(z))_{n\geq 1}$
is confined to $[\bar{\mc c}, z]$ and thus converges to $\bar{\mc c}$, by its definition.


This shows that $\bar{\mc c}$ cannot be approached from below by elements of the set
$\{\mc c_\infty(z)\}_{z\in \mbb R_+}$ and since naturally it cannot be approached from above by such elements,
it must be an isolated point and hence the supremum is attained. Since the sequences $(\mc c^{(n)}(z))_{n\geq 1}$
are nonincreasing, $\bar{\mc c}$ can be approximated by such sequences with $z\geq \bar{\mc c}$ only from above.
\end{proof}

\begin{theorem}
Semigroup \sem{S_{K_{\beta,m}}} is irreducible if and only if
\begin{equation}
\sup \mathrm{supp}\, \beta =\infty\label{betas}
\end{equation}
or
\begin{equation}
\sup \mathrm{supp}\, \beta> \bar{\mc c}
\label{ass2a}
\end{equation}
or
\begin{equation}
\bar{\mc c} =0.
\label{ass2}
\end{equation}\label{thirred}
\end{theorem}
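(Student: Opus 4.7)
The plan is to argue entirely at the resolvent level: by a standard fact for positive $C_0$-semigroups on an $L_1$-space, $(S_{K_{\beta,m}}(t))_{t\ge 0}$ is irreducible if and only if, for some (equivalently every) sufficiently large $\lambda$, the vector $R(\lambda,K_{\beta,m})f$ is strictly positive a.e.\ on $\mathbb{R}_+$ for every $0\ne f\in X_{m,+}$. Two ingredients from Section~\ref{secGT} organize everything: the decomposition
\[
R(\lambda,Z_{\beta,m})g=R(\lambda,Z_{0,m})g+\frac{\langle\beta,R(\lambda,Z_{0,m})g\rangle}{1-\langle\beta,e_\lambda\rangle}\,e_\lambda
\]
from Theorem~\ref{resG}, and the Dyson--Phillips series $R(\lambda,K_{\beta,m})=\sum_{n=0}^\infty R(\lambda,Z_{\beta,m})[B_m R(\lambda,Z_{\beta,m})]^n$ from Theorem~\ref{LIN2}. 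Two elementary support computations are used throughout: by \eqref{res0a}, $R(\lambda,Z_{0,m})g$ is strictly positive on $(\inf\mathrm{supp}\,g,\infty)$; and by the definition of $\mc b$, if $g$ is supported in $[z,\infty)$ then $B_m g$ is supported in $[\mc c(z),\infty)$. Since $e_\lambda>0$ everywhere on $\mathbb{R}_+$, the decisive consequence is: whenever $\langle\beta,R(\lambda,Z_{0,m})g\rangle>0$ for some iterate $g=(B_m R(\lambda,Z_{\beta,m}))^n f$, the corresponding $n$-th term of the Dyson series is strictly positive a.e.\ on $\mathbb{R}_+$, hence $R(\lambda,K_{\beta,m})f>0$ a.e.

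For the sufficiency, fix $0\ne f\in X_{m,+}$ and set $y_0=\inf\mathrm{supp}\,f$. Under \eqref{betas}, $\mathrm{supp}\,\beta\cap(y_0,\infty)$ has positive measure, hence $\langle\beta,R(\lambda,Z_{0,m})f\rangle>0$ and the key consequence closes the case at $n=0$. Under \eqref{ass2a}, writing $s=\sup\mathrm{supp}\,\beta>\bar{\mc c}$, the inf-supports of successive iterates descend along the sequence $\mc c^{(n)}(y_0)\downarrow\mc c_\infty(y_0)\le\bar{\mc c}<s$; for $n$ large, the support of the $n$-th iterate meets the positive-measure set $\mathrm{supp}\,\beta\cap(s-\ep,s]$ for small $\ep$, triggering the key consequence. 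Under \eqref{ass2}, $\mc c^{(n)}(y_0)\downarrow 0$, so $\bigcup_n[\mc c^{(n)}(y_0),\infty)=(0,\infty)$; the series is a sum of nonnegative terms whose $n$-th summand is a.e.\ strictly positive on $[\mc c^{(n)}(y_0),\infty)$, giving $R(\lambda,K_{\beta,m})f>0$ a.e.\ on $\mathbb{R}_+$.

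For the necessity, assume none of the three conditions holds, so $\bar{\mc c}>0$ and $\mathrm{supp}\,\beta\subset[0,\bar{\mc c}]$. The plan is to exhibit a non-trivial proper closed order ideal of $X_m$ that is invariant under $R(\lambda,K_{\beta,m})$, namely,
\[
I:=\{f\in X_m\,:\,f=0\text{ a.e.\ on }[0,\bar{\mc c}]\}.
\]
For $f\in I$, formula \eqref{res0a} gives $R(\lambda,Z_{0,m})f=0$ on $[0,\bar{\mc c}]$; the overlap of its support with $\mathrm{supp}\,\beta$ is null, hence $\langle\beta,R(\lambda,Z_{0,m})f\rangle=0$ and $R(\lambda,Z_{\beta,m})f=R(\lambda,Z_{0,m})f\in I$. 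The crux is that $B_m$ also maps $I$ into itself: in Case~1 of Lemma~\ref{lem2}, $\mc c(\bar{\mc c})=\bar{\mc c}$ forces $\mc b(y)\ge\bar{\mc c}$ for every $y\ge\bar{\mc c}$; in Case~2, $\mc b(y)\ge\mc c(y)>\bar{\mc c}$ for every $y>\bar{\mc c}$, and the single point $y=\bar{\mc c}$ contributes nothing to the integral defining $B_m$. Iterating, every term of the Dyson series for $R(\lambda,K_{\beta,m})f$ lies in $I$, and closedness of $I$ yields $R(\lambda,K_{\beta,m})f\in I$; hence $I$ is invariant and the semigroup fails to be irreducible.

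The most delicate point is the $B_m$-invariance of $I$ in Case~2 of Lemma~\ref{lem2}: naively, $\mc c(\bar{\mc c})<\bar{\mc c}$ suggests that $B_m$ should leak mass below $\bar{\mc c}$. The rescue is precisely the isolation of $\bar{\mc c}$ inside $\{\mc c_\infty(z)\}_{z\in\mathbb{R}_+}$ granted by Lemma~\ref{lem2}.(2), which ensures $\mc b(y)>\bar{\mc c}$ for every $y$ strictly above $\bar{\mc c}$; since elements of $I$ are zero on a full interval containing $\bar{\mc c}$, only such $y$'s contribute to $B_m$, blocking the leakage.
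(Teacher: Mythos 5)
Your proposal is correct and follows the same strategy as the paper: irreducibility is read off at the resolvent level from the Dyson--Phillips series for $R(\lambda,K_{\beta,m})$, positivity being propagated through the $\langle\beta,\cdot\rangle$-component of $R(\lambda,Z_{\beta,m})$ together with the everywhere-positive profile $e_\lambda$, while the support of the iterates is tracked through the sequence $\mc c^{(n)}$ and Lemma~\ref{lem2}. Your packaging is a little leaner than the paper's on both halves---for sufficiency you bypass the explicit inductive formula \eqref{indass} by noting directly that any Dyson term whose $\langle\beta,R(\lambda,Z_{0,m})\cdot\rangle$-component is positive already injects a globally positive multiple of $e_\lambda$; for necessity you phrase the obstruction as invariance of the closed order ideal $I=\{f:f=0 \text{ a.e.\ on }[0,\bar{\mc c}]\}$ under $R(\lambda,Z_{\beta,m})$ and $B_m$, rather than tracking support confinement of $R(\lambda,K_{\beta,m})f$ for a single test $f$ as the paper does---but these are equivalent reformulations of the same argument, with the ideal-invariance in the necessity step resting on exactly the same consequence of Lemma~\ref{lem2} (namely $\mc b(y)\ge\bar{\mc c}$, resp.\ $>\bar{\mc c}$, for $y\ge\bar{\mc c}$, resp.\ $y>\bar{\mc c}$) that the paper invokes.
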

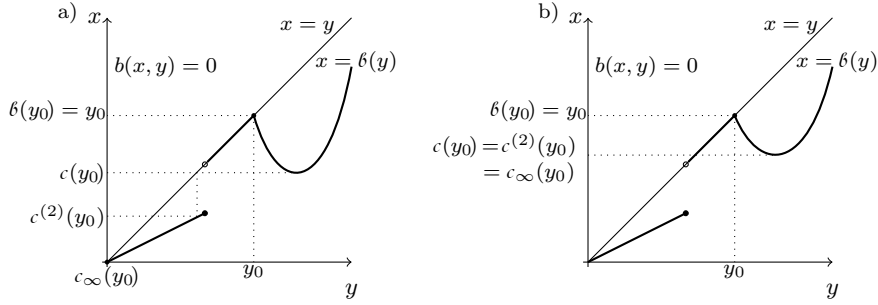
\begin{figure}
\begin{center}
\begin{tikzpicture}[scale=0.65]
\draw (-2.8,5.1) node {a)};
\draw[thin,->] (-2.1,0)--(3,0);
\draw[thin,->] (-2,-0.1)--(-2,5);
\draw[thin,-] (-2,0)--(3,5);
\draw[-,thick] (0.05,2.05)--(1,3);
\draw[scale=1, domain=-2:0, smooth, variable=\x, black,thick] plot ({\x}, {0.5*(\x+2))});
\draw  [thick] (1,3)..controls (1.5,1.3) and (2.5,1.3)..(3,4);
\draw (3,-0.6) node {\small{$y$}};
\draw (-2.25,5) node {\small{$x$}};
\draw (-0.8,4) node {\footnotesize{$b(x,y)=0$}};
\draw (2.1,4.8) node {\footnotesize{$x=y$}};
\draw (3.1,4.1) node {\footnotesize{$x=\mc b(y)$}};
\draw (0,1) circle  [radius=0.05];
\fill (0,1) circle  [radius=0.05];
\draw (0,2) circle  [radius=0.05];
\fill (1,3) circle  [radius=0.05];
\fill (-2,0) circle  [radius=0.05];
\draw [thin,dotted] (1,3)--(1,0);
\draw [thin,dotted] (-2,1.83)--(1.9,1.83);
\draw [thin,dotted] (1,3)--(-2,3);
\draw (-3,3.1) node {\footnotesize{$\mc b(y_0\!)=y_0$}};
\draw (-2.5,1.8) node {\footnotesize{$\mc c(y_0\!)$}};
\draw (1,-0.2) node {\footnotesize{$y_0$}};
\draw [thin,dotted] (-0.16,0.94)--(-2,0.94);
\draw [thin,dotted] (-0.16,1.84)--(-0.16,0.94);
\draw (-2.8,0.94) node {\footnotesize{$\mc c^{(2)}(y_0)$}};
\draw (-2,-0.3) node {\footnotesize{$\mc c_\infty(y_0\!)$}};\end{tikzpicture}
\begin{tikzpicture}[scale=0.65]
\draw (-2.8,5.1) node {b)};
\draw[thin,->] (-2.1,0)--(3,0);
\draw[thin,->] (-2,-0.1)--(-2,5);
\draw[thin,-] (-2,0)--(3,5);
\draw[-,thick] (0.05,2.05)--(1,3);
\draw[scale=1, domain=-2:0, smooth, variable=\x, black,thick] plot ({\x}, {0.5*(\x+2))});
\draw  [thick] (1,3)..controls (1.5,1.8) and (2.5,1.8)..(3,4);
\draw (3,-0.6) node {\small{$y$}};
\draw (-2.25,5) node {\small{$x$}};
\draw (-0.8,4) node {\footnotesize{$b(x,y)=0$}};
\draw (2.15,4.8) node {\footnotesize{$x=y$}};
\draw (3.1,4.1) node {\footnotesize{$x=\mc b(y)$}};
\draw (0,1) circle  [radius=0.05];
\fill (0,1) circle  [radius=0.05];
\draw (0,2) circle  [radius=0.05];
\fill (1,3) circle  [radius=0.05];
\draw [thin,dotted] (1,3)--(1,0);
\draw [thin,dotted] (-2,2.19)--(1.85,2.19);
\draw [thin,dotted] (1,3)--(-2,3);
\draw (-3,3.1) node {\footnotesize{$\mc b(y_0\!)=y_0$}};
\draw (-3.7,2.1) node {\footnotesize{$\begin{array}{cc}\mc c(y_0\!)\!\!\!\!&\!\!=\!\mc c^{(2)}(y_0)\\&\!\!= {\mc c}_\infty(y_0)\end{array}$}};
\draw (1,-0.2) node {\footnotesize{$y_0$}};
\end{tikzpicture}
\caption{The case when the supp\,$b$ allows for jumping over the gap where $\mc b(y)= y$ (figure a))
and where the jump is too short (figure b)). }\label{fig1b}
\end{center}
\end{figure}
{\begin{proof}
By \eqref{RESOL}, we have
\begin{align*}
[R(\lambda,Z_{\beta,m})]f(x)&=[R(\lambda,Z_{0,m})]f(x)
+ e_\la(x)\frac{\langle\beta ,R(\lambda,Z_{0,m})f\rangle}{1-\langle\beta ,e_\la\rangle}\\
&=[(I+E_\la)R(\lambda,Z_{0,m})]f(x),\qquad f\in X_m,
\end{align*}
where
$$
[R(\la,Z_{0,m})f](x) = \frac{e^{-\lambda R(x) -Q(x)}}{r(x)}\int _0^xf(y)e^{\lambda R(y) +Q(y)}dy
$$
and
\begin{equation}
R(\la, K_{\beta,m})=R(\la,Z_{\beta,m})\sum_{n=0}^{\infty }\left[ B_mR(\la,Z_{\beta,m})\right] ^{n}.\label{res9}
\end{equation}
Let $g > 0$ and set $z_g = \sup\{z\;: g(z) = 0\; a.e.\; \text{on}\; [0,z]\}$. If $z_g =0$,
then already $R(\la,Z_{\beta,m})g\geq R(\la,Z_{0,m})g >0$, and the result is valid. Assume then that $z_g >0$
and observe that
$$
\Psi_0(z) := [R(\la,Z_{0,m})g](z) = \left\{\begin{array}{lcl}\frac{e^{-\lambda R(z) -Q(z)}}{r(z)}
\int _{z_g}^zg(y)e^{\lambda R(y) +Q(y)}dy,&\text{for}& z> z_g,\\
0,&\text{for}& 0\leq z \leq z_g.\end{array}\right.
$$
Hence, $[R(\la,Z_{0,m})g](z)>0$ is strictly positive for $z>z_g$, while
$$
\langle\beta,\Psi_0\rangle = \int_{z_g}^{\infty}\beta(x)\Psi_0(x)dx>0
$$
and
$$
[R(\la,K_{\beta,m})g](z)\geq [R(\la,Z_{\beta,m})g](z)\geq
\frac{e_\la(z)}{1-\langle\beta ,e_\la\rangle}\langle\beta,\Psi_0\rangle>0,
$$
provided \eqref{betas} is satisfied.
Hence, $R(\la,K_{\beta,m})g$ is positivity improving.

If \eqref{betas} is not satisfied, let us consider the terms $R(\la,Z_{\beta,m})\left[ B_m R(\la,Z_{\beta,m})\right]^{n}$
of \eqref{res9}. Denoting $\bar e_\la = e_\la/(1-\langle\beta ,e_\la\rangle)$, we see that
\begin{align*}
&R(\la,Z_{\beta,m}) B_mR(\la,Z_{\beta,m}) = R(\la,Z_{0,m}) B_mR(\la,Z_{0,m})
+ R(\la,Z_{0,m}) B_m\bar e_{\la} \langle\beta, R(\la,Z_{0,m})\cdot\rangle\\
&\qquad\qquad\qquad
+ \bar e_\la \langle \beta,R(\la,Z_{0,m}) B_mR(\la,Z_{0,m})\cdot\rangle
+ \bar e_\la \langle \beta, R(\la,Z_{0,m})B_m\bar e_\la\rangle \langle \beta, R(\la,Z_{0,m})\cdot\rangle\\
&\qquad\qquad\qquad
=R(\la,Z_{0,m}) B_mR(\la,Z_{0,m}) + \sum\limits_{i=0}^1 f^1_i
\langle\beta, R(\la,Z_{0,m}) [B_mR(\la,Z_{0,m})]^i\cdot\rangle,
\end{align*}
where $f^1_i$ are scalar functions. Hence, making the inductive assumption
\begin{equation}\label{indass}
\begin{split}
R(\la,Z_{\beta,m}) [B_mR(\la,Z_{\beta,m})]^{n}&=R(\la,Z_{0,m}) [B_mR(\la,Z_{0,m})]^{n} \\
&+ \sum\limits_{i=0}^{n} f^{n}_i \langle\beta, R(\la,Z_{0,m}) [B_mR(\la,Z_{0,m})]^i\cdot\rangle
\end{split}
\end{equation}
for some functions $f^n_i, i=0,\ldots,n$, we have
\begin{align*}
&R(\la,Z_{\beta,m}) [B_m R(\la,Z_{\beta,m})]^{n+1}\\
&\qquad
= R(\la,Z_{\beta,m}) [B_mR(\la,Z_{\beta,m})]^{n}(B_mR(\la,Z_{0,m})
+ B_m\bar e_\la \langle\beta,R(\la,Z_{0,m})\cdot\rangle\\
&\qquad
= R(\la,Z_{0,m}) [B_mR(\la,Z_{0,m})]^{n+1}
+ R(\la,Z_{0,m}) [B_mR(\la,Z_{0,m})]^{n}B_m\bar e_\la \langle\beta,R(\la,Z_{0,m})\cdot\rangle\\
&\qquad
+ \sum\limits_{i=0}^n f^n_i \langle\beta, R(\la,Z_{0,m}) [B_mR(\la,Z_{0,m})]^{i+1}\cdot\rangle \\
&\qquad
+ \sum\limits_{i=0}^n f^n_i \langle\beta, R(\la,Z_{0,m}) [B_mR(\la,Z_{0,m})]^iB_m\bar e_\la
\rangle \langle\beta,R(\la,Z_{0,m})\cdot\rangle\\
&\qquad
=R(\la,Z_{0,m}) [B_mR(\la,Z_{0,m})]^{n+1}
+ \sum\limits_{i=0}^{n+1} f^{n+1}_i \langle\beta, R(\la,Z_{0,m}) [B_mR(\la,Z_{0,m})]^i\cdot\rangle,
\end{align*}
hence \eqref{indass} is proved. We also obtain $f_i^{n+1} = f_{i-1}^n$ for $i>1$ and
$$
f_{0}^{n+1} =  R(\la,Z_{0,m}) [B_mR(\la,Z_{0,m})]^{n}B_m\bar e_\la
+ \sum\limits_{i=0}^n f^n_i \langle\beta, R(\la,Z_{0,m}) [B_mR(\la,Z_{0,m})]^iB_m\bar e_\la \rangle
$$
for $n\geq 1$, with $f^0_0 = \bar e_\la$.

Next, for $z<z_g$,
\begin{align*}
&\Psi_1(z):=[R(\la,Z_{0,m})B_m R(\la,Z_{0,m})g](z) = [R(\la,Z_{\beta,m})B_m \Psi_0](z)\\
&=\frac{e^{-\lambda R(z) -Q(z)}}{r(z)}\int_{0}^{z}e^{\lambda R(y) +Q(y)}
\left(\int_{z_g}^{\infty}a(s)b(y,s)\Psi_0(s)ds\right) dy,
\end{align*}
where the inner integration is, in fact, carried out over $\mc A\cap [z_g,\infty)$. Hence,
$\Psi_1(z)>0$ for $z>\mc c(z_g)$. So, in particular, if $a(y)b(x,y)>0$ for all $y>0$ and $0< x <y$,
so that $\mc b(y)=0$ for any $y>0$, then the result is proved. If not, then for the third term, we have
$$
\Psi_2(z):=[R(\la,Z_{0,m})[B_m R(\la,Z_{0,m})]^2 g](z) = [R(\la,Z_{0,m})B_m \Psi_1](z)
$$
and thus, by the same argument, $\Psi_2(z)>0$ for $z> \mc c^{(2)}(z_g)$. Using induction, we conclude that
$[R(\la,Z_{\beta,m})g](z)>0$ almost everywhere on $(\mc c_\infty(z_g), \infty)$.

Hence, if $\bar{\mc c} =0$, the theorem is proved. If not, by $\sup {\rm supp}\,\beta > \bar{\mc c}$ we obtain
$$
\langle\beta, R(\la,Z_{0,m}) [B_mR(\la,Z_{0,m})]^{n} g\rangle >0,
$$
for sufficiently large $n$ and, since $f_n^n(z) = \bar e_\la (z)>0$ for all $z>0,$  the result follows.

To prove the necessity, negating \eqref{betas}--\eqref{ass2}, we have $0<\bar{\mc c}<\infty$ and
$\sup{\rm supp}\,\beta \leq \bar{\mc c}$. Let us consider $f$ supported in $[z,\infty)$ for some $z>\bar{\mc c}$.
Then, by Lemma \ref{lem2}, the supports of $[B_m R(\la,Z_{0,m})]^{n} f$, $n\ge 0$, are confined to
$[\bar{\mc c},\infty)$ and thus
$$
\langle\beta, R(\la,Z_{0,m}) [B_mR(\la,Z_{0,m})]^{n} g\rangle = 0,
$$
for any $n$ due to the assumption on $\beta$. Hence ${\rm supp}\, R(\la, K_{\beta,m})f \subset [\bar{\mc c},\infty)$ and the semigroup is not positivity improving.
\end{proof}
This theorem generalizes many earlier results.
\begin{remark} \label{exMKJB} (a) For instance, in \cite{MKJB} the authors used the assumption that for any
$x>0,$ $\mathrm {supp}\; a(\cdot)b(x, \cdot)$ is infinite, see Fig. \ref{fig1a}. In our notation, it means that for any
$x>0$ and any $z$ there is $y>z$ such that
$\mc b(y) \leq x$. This implies that for any $x$ and any $z$, $\mc c(z)\leq x$, which yields ${\mc c}_\infty (z)\leq x$
and hence $\bar{\mc c}\leq x$. Since $x$ is arbitrary, $\bar{\mc c}=0$.

(b) If $a \in C(\mbb R_+)$, then the conditions of Theorem \ref{thirred} can be made more explicit. Indeed, by
\cite[Theorem 3.10]{Apo}, there exist increasing sequences $(\alpha_k)_{k\in \mbb Z}$ and $(\beta_k)_{k\in \mbb Z}$,
with $0<\alpha_k<\beta_k<\alpha_{k+1}<\infty$, such that
$$
{\rm supp}\, a = \bigcup\limits_{k\in \mbb Z} (\alpha_k,\beta_k),
$$
so that the null-set is given by
$$
\mbb R_+\setminus {\rm supp}\, a = \bigcup\limits_{k\in \mbb Z} [\beta_k,\alpha_{k+1}].
$$
Then $\bar{\mc c} = 0$ if and only if for each $k\in \mbb Z$ there exists $r>k$ such that $\mc b(y)<\beta_k$
for some $y \in (\alpha_r,\beta_r)$. In other words, for any gap  $[\beta_k,\alpha_{k+1}]$ of the support of $a$, there must
be a point in one of the following intervals of the support of $a$ on which ${\rm supp}\, b$ overlaps with $[\beta_k,\alpha_{k+1}]$.

(c) On physical grounds, we expect that if a particle fragments, it produces at least two daughter particles,
and not all daughter particles will have masses close to $y$, see \cite[Section 2.2.3.2]{JWL}. As shown in
\cite[Theorem 5.2.21]{JWL}, for irreducibility it is even sufficient to have $n_0(y)\geq 1+\delta$
for some $\delta>0$ independent of $y$ (that is, $\mc b(y)\leq 1+\delta$ for all $y\in \mbb R_+$). We can generalize this result by showing
that if there is a continuous function $\phi$ such that
$$
\mc c(y) \leq \phi(y) <y,
$$
for any $y>0$, then $\bar{\mc c}=0$, and thus \sem{S_{K_{0,m}}} (and hence \sem{S_{K_{\beta,m}}}
for any $\beta\geq 0$) are irreducible. Indeed, consider any $z>0$ and the corresponding $c_\infty(z)$. Then, from
the monotonicity of $\mc c$, we obtain
$$
\mc c^{(2)}(z) = \mc c(\mc c(z)) \leq \mc c(\phi (z)) \leq \phi^{(2)}(z)
$$
and hence $c_\infty(z)\leq \phi_\infty(z)\leq z$. If $c_\infty(z) \neq 0$, then also $\phi_\infty(z) \neq 0$.
But $\phi_\infty(z)$ is an equilibrium of the discrete dynamical system $r_{n+1} = \phi(r_n)$, $r_0 = z$, i.e.,
$\phi(\phi_\infty(z)) = \phi_\infty(z)$, which is impossible by our assumption on $\phi$.
\end{remark}

\section{The spectral gap}\label{subsecsg}
The analysis presented in Sections~\ref{secGT} and \ref{secsg}
paves the way for a complete description of the large time asymptotics of \eqref{Tsapele}. Indeed, as observed in
\cite[Corollary~22]{MKJB}, under assumptions \eqref{RRe}, \eqref{g1}, \eqref{n0}, \eqref{cm}, \eqref{mass}
and the additional hypothesis \eqref{Tss} (see the discussion in Section~\ref{secMKJB}),
the semigroup $(S_{K_{0,m}}(t))_{t\ge 0}$ is resolvent compact. Combining this fact with Theorems~\ref{thpert} and
\ref{thirred}, we have

\begin{theorem}\label{thspgap}
Assume  \eqref{RRe}, \eqref{g1}, \eqref{aa1}, \eqref{n0}, \eqref{cm}, \eqref{Tss}, \eqref{massum} and \eqref{betaas}
are satisfied. Assume further that one of the hypotheses of Theorem~\ref{thirred} holds.
Then $(S_{K_{\beta,m}}(t))_{t\ge0}$ has a spectral gap, i.e.,
\begin{equation}\label{spgap}
r_{ess}\bigl(S_{K_{\beta,m}}(t)\bigr) <r_\sigma \bigl(S_{K_{\beta,m}}(t)\bigr),\quad t>0,
\end{equation}
where $r_{ess}(\cdot)$ and $r_\sigma(\cdot)$ denote the essential spectral and the spectral radii, respectively.
\end{theorem}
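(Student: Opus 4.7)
Plan: I would prove Theorem \ref{thspgap} by replaying, for $K_{\beta,m}$, the Voigt-type argument outlined in Section \ref{secMKJB} for $K_{0,m}$, with Theorem \ref{thpert} serving as the bridge between the two generators. The first task is to upgrade the compact-perturbation identity \eqref{eqsim} from the semigroup to the resolvent level. Laplace-transforming \eqref{eqsim} against $e^{-\mu t}$ (and using that a norm-convergent integral of compact operators remains compact) produces
\[
(I+E_\lambda)^{-1} R(\mu, K_{\beta,m}) (I+E_\lambda) - R(\mu, K_{0,m}) \in \mathcal{K}(X_m)
\]
for $\mu$ sufficiently large. Together with compactness of $R(\mu, K_{0,m})$ from \cite[Corollary~22]{MKJB}, this makes $R(\mu, K_{\beta,m})$ compact; by Theorem~\ref{thirred} it is also irreducible.

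Next, I would reuse the splitting $B_m = \hat{B}_m + \bar{B}_m$ from Section \ref{secMKJB}, with $0 \le \hat{B}_m \lneq B_m$ and $\bar{B}_m$ an integral operator with compactly supported bounded kernel. The domination $R(\mu, K_{\beta,m}) \ge R(\mu, K_{0,m})$ established in Theorem \ref{LIN2}, combined with the strict inequality $R(\mu, Z_{0,m}+\hat{B}_m) \lneq R(\mu, K_{0,m})$ from \cite{MKJB}, yields $R(\mu, Z_{0,m}+\hat{B}_m) \lneq R(\mu, K_{\beta,m})$. Voigt's theorem \cite{IVo}, applied to the compact and irreducible operator $R(\mu, K_{\beta,m})$, then forces $s(Z_{0,m}+\hat{B}_m) < s(K_{\beta,m})$. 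On the other side, Theorem \ref{thpert} and the invariance of the essential spectral radius under similarity and compact perturbations give $r_{ess}(S_{K_{\beta,m}}(t)) = r_{ess}(S_{K_{0,m}}(t))$, while the weak-compactness argument for the Dyson--Phillips integral from Section \ref{secMKJB} (via \cite{Schl, MKM2, Ka}) yields $r_{ess}(S_{K_{0,m}}(t)) = r_{ess}(\hat{S}(t))$, where $(\hat{S}(t))_{t\ge 0}$ is generated by $Z_{0,m}+\hat{B}_m$. Using that in the $L^1$-type space $X_m$ the type of a positive semigroup equals the spectral bound of its generator (\cite[Theorem 3.37]{JA}), the chain
\[
r_{ess}(S_{K_{\beta,m}}(t)) = r_{ess}(\hat{S}(t)) \le r_\sigma(\hat{S}(t)) = e^{s(Z_{0,m}+\hat{B}_m)t} < e^{s(K_{\beta,m})t} = r_\sigma(S_{K_{\beta,m}}(t))
\]
closes the argument and delivers \eqref{spgap}.

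The main obstacle I anticipate is the resolvent-level upgrade of the first step: Theorem \ref{thpert} is stated at the semigroup level, so one must verify uniform operator-norm control on the compact-valued path $t \mapsto (I+E_\lambda)^{-1}S_{K_{\beta,m}}(t)(I+E_\lambda) - S_{K_{0,m}}(t)$ appearing in \eqref{eqsim}, in order to guarantee that its Laplace transform is itself compact. Once this point is secured, the remainder reduces to bookkeeping: the Voigt step, the splitting argument, and the $L^1$ identification of type with spectral bound all transfer from \cite{MKJB} to the present setting essentially verbatim, with the single substitution $K_{0,m}\rightsquigarrow K_{\beta,m}$ permitted by the irreducibility provided by Theorem \ref{thirred}.
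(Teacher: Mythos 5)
Your proof is correct in outline but takes a longer route than the paper's, and both differences are worth noting. First, the resolvent-level compactness of $R(\lambda,K_{\beta,m})$, which you propose to extract from \eqref{eqsim} via a Laplace transform (worrying, rightly, about operator-norm control of the compact-valued path $t\mapsto (I+E_\lambda)^{-1}S_{K_{\beta,m}}(t)(I+E_\lambda)-S_{K_{0,m}}(t)$), is available directly and much more cheaply: Theorem~\ref{resG} gives $R(\lambda, Z_{\beta,m}) = (I+E_\lambda)R(\lambda, Z_{0,m})$, with $R(\lambda,Z_{0,m})$ compact by \cite[Corollary~22]{MKJB} and $E_\lambda$ rank one, so $R(\lambda, Z_{\beta,m})$ is compact, and the Dyson--Phillips series \eqref{reseq} then exhibits $R(\lambda, K_{\beta,m})$ as a norm-convergent series of compact operators. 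Second, you compare $K_{\beta,m}$ with $Z_{0,m}+\widehat{B}_m$, which forces you to replay the entire splitting and weak-compactness machinery of \cite{MKJB} to reach $r_{ess}(S_{K_{0,m}}(t)) = r_{ess}(\widehat{S}_m(t))$ before invoking Marek's theorem; the paper instead compares $K_{\beta,m}$ directly with $K_{0,m}$. Since $R(\lambda,K_{0,m}) \le R(\lambda,K_{\beta,m})$ with both compact and irreducible (Theorem~\ref{thirred}) and the inequality strict for $\beta>0$, Marek's theorem \cite[Theorem~4.3]{IVo} yields $r_\sigma(S_{K_{0,m}}(t)) < r_\sigma(S_{K_{\beta,m}}(t))$ at once, and chaining this with $r_{ess}(S_{K_{\beta,m}}(t)) = r_{ess}(S_{K_{0,m}}(t)) \le r_\sigma(S_{K_{0,m}}(t))$ --- the equality from Theorem~\ref{thpert}, the inequality generic --- closes the proof without re-splitting $B_m$ or re-establishing essential-radius stability for an auxiliary semigroup. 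Your argument would go through, but it reconstructs intermediate steps the direct comparison renders unnecessary, and its one delicate point (justifying compactness of the Laplace transform of a compact-valued path) is exactly the step the paper's proof sidesteps.
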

\begin{proof}
(a) The first group of assumptions ensures that the resolvents $R(\lambda, Z_{0,m})$ and $R(\lambda, K_{0,m})$
are compact. These fact, combined with Theorem~\ref{resG} and \eqref{reseq}
(see the proof of Theorem~\ref{LIN2}),
indicates that $R(\lambda, Z_{\beta,m})$ and $R(\lambda, K_{\beta,m})$, with $\beta>0$, are compact, provided
$\lambda$ is sufficiently large. In addition, from Theorem~\ref{resG} and \eqref{reseq}
it follows that
\[
R(\lambda, K_{0,m})\le R(\lambda, K_{\beta,m}).
\]

(b) The second group of assumptions shows that both $R(\lambda, K_{0,m})$ and $R(\lambda, K_{\beta,m})$ are irreducible.
Since $R(\lambda, K_{0,m})\ne R(\lambda, K_{\beta,m})$, when $\beta>0$, the comparison theory of
\cite[Theorem~4.3]{IVo}, combined with the spectral theory of positive $C_0$-semigroups
(see e.g. \cite[Chapter VI]{Klaus1}), indicates that
\[
\frac{1}{\lambda - s(K_{0,m})} = r_{\sigma}(R(\lambda, K_{0,m})) < r_{\sigma}(R(\lambda, K_{\beta,m}))
= \frac{1}{\lambda - s(K_{\beta,m})},
\]
so that
\[
r_{\sigma}\bigl(S_{K_{0,m}}(t)\bigr) = e^{s(K_{0,m}) t}
< e^{s(K_{\beta,m}) t} = r_{\sigma}\bigl(S_{K_{\beta,m}}(t)\bigr),\quad t>0.
\]
On the other hand, from \eqref{betaas} (see Theorem~\ref{thpert} and \eqref{eqsim}), we have
\[
r_{ess}\bigl(S_{K_{\beta,m}}(t)\bigr) = r_{ess}\bigl(S_{K_{0,m}}(t)\bigr) \le r_{\sigma}\bigl(S_{K_{0,m}}(t)\bigr).
\]
Combining the last two inequalities, we arrive at \eqref{spgap} and the claim is settled.
\end{proof}

In the setting of Theorem~\ref{thspgap}, the resolvent $R(\lambda, S_{K_{\beta,m}})$ falls into the scope of the
Perron-Frobenius theory for positive irreducible compact operators. As a byproduct of this theory, we see that
$s(K_{\beta,m})$ is the simple dominant isolated eigenvalue of $(K_{\beta,m}, D(Z_{\beta,m}))$.
Let $u_{\beta,m}\in X_{m,+}$ and $u_{\beta,m}^\ast \in X_{m,+}^\ast$ be the associated positive eigenfunctions
of  $(K_{\beta,m}, D(Z_{\beta,m}))$ and its transpose $(K_{\beta,m}^\ast, D(K_{\beta,m}^\ast))$, respectively.
In the usual way, we define the eigenprojector
\[
\mathcal{P} = u_{\beta,m} \langle u_{\beta,m}^\ast, \cdot\rangle.
\]
With this notation, the standard asynchronous exponential growth theory for positive $C_0$-semigroups
(see e.g. \cite[Theorem~VI.3.5]{Klaus1}) yields
\begin{corollary}\label{asyexpgr}
Under the hypothesis of Theorem~\ref{thspgap}, there exist positive constants $\varepsilon$ and $c$, such that
\begin{equation}\label{eqasyexpgr}
\|e^{-s(K_{\beta,m}) t} S_{K_{\beta,m}}(t)f - \mathcal{P}f \|_m \le c e^{-\varepsilon t} \|f\|_m,\quad t\ge 0,
\end{equation}
for all $f\in X_m$.
\end{corollary}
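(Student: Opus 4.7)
The plan is to invoke the abstract asynchronous exponential growth machinery, specifically \cite[Theorem VI.3.5]{Klaus1}, which yields the estimate \eqref{eqasyexpgr} for a positive $C_0$-semigroup whose generator possesses a strictly dominant, algebraically simple, isolated eigenvalue separated from the rest of the spectrum by a spectral gap. All three hypotheses are prepared by the material already developed in the preceding sections, so the task reduces to assembling them and citing the abstract result.

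First, I would deduce from Theorem \ref{thspgap} together with Theorem \ref{thirred} that the resolvent $R(\lambda,K_{\beta,m})$ is compact, positive, and irreducible for $\lambda$ sufficiently large. The Krein–Rutman theorem for compact irreducible positive operators (cf. \cite[Chapter V]{Nag}, \cite[Proposition VI.3.4]{Klaus1}) then implies that its spectral radius $1/(\lambda - s(K_{\beta,m}))$ is an algebraically simple, isolated, dominant eigenvalue admitting a quasi-interior eigenvector $u_{\beta,m} \in X_{m,+}$ and a strictly positive dual eigenvector $u_{\beta,m}^\ast \in X_{m,+}^\ast$. Translating back to the generator, $s(K_{\beta,m})$ is therefore a simple first-order pole of $R(\cdot,K_{\beta,m})$, and the associated residue is exactly the rank-one projection $\mathcal{P} = u_{\beta,m}\langle u_{\beta,m}^\ast,\cdot\rangle$ (after the standard normalization $\langle u_{\beta,m}^\ast, u_{\beta,m}\rangle = 1$), which commutes with $(S_{K_{\beta,m}}(t))_{t\ge 0}$.

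Second, the spectral gap \eqref{spgap} furnishes some $\delta > 0$ with $r_{ess}(S_{K_{\beta,m}}(t)) \le e^{(s(K_{\beta,m}) - \delta)t}$ for one (hence all) $t>0$, so every spectral value of $S_{K_{\beta,m}}(t)$ of modulus strictly greater than $e^{(s(K_{\beta,m})-\delta)t}$ belongs to the Riesz spectrum. The irreducibility of the semigroup, combined with the fact that $e^{s(K_{\beta,m})t}$ is a simple eigenvalue of $S_{K_{\beta,m}}(t)$ admitting a strictly positive eigenvector, forces the peripheral spectrum of $S_{K_{\beta,m}}(t)$ to reduce to $\{e^{s(K_{\beta,m})t}\}$ via the standard cyclicity argument in \cite[Chapter VI]{Klaus1}.

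Third, decomposing $X_m = \mathcal{P} X_m \oplus (I - \mathcal{P})X_m$ into invariant subspaces, the restriction of $S_{K_{\beta,m}}(t)$ to $(I - \mathcal{P})X_m$ has its spectral radius strictly below $e^{s(K_{\beta,m})t}$; in $L_1$ settings the type of a positive semigroup equals the spectral bound of its generator \cite[Theorem 3.37]{JA}, which promotes this spectral inequality into a uniform operator-norm bound
\[
\|S_{K_{\beta,m}}(t)(I - \mathcal{P})\|_m \le c\, e^{(s(K_{\beta,m}) - \varepsilon) t},\quad t\ge 0,
\]
for some $0 < \varepsilon < \delta$ and $c > 0$. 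Since $S_{K_{\beta,m}}(t)\mathcal{P} = e^{s(K_{\beta,m})t}\mathcal{P}$, multiplying by $e^{-s(K_{\beta,m})t}$ yields \eqref{eqasyexpgr}. The only real subtlety is confirming that the peripheral spectrum is a singleton (a point handled by irreducibility plus the Perron eigenvector being quasi-interior); every other step is an off-the-shelf application of the abstract AEG theorem to the framework set up in Theorems \ref{thpert}, \ref{thirred} and \ref{thspgap}.
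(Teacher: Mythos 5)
Your proposal is correct and follows essentially the same route as the paper: the paper likewise observes that the resolvent falls within the scope of Perron–Frobenius theory for compact, positive, irreducible operators (giving the simple dominant eigenvalue and the rank-one eigenprojector $\mathcal{P}$) and then cites \cite[Theorem~VI.3.5]{Klaus1} to conclude. You simply unpack the internals of that cited abstract AEG theorem — the Krein–Rutman step, the peripheral-spectrum reduction via irreducibility, the $L_1$ spectral-determination argument on $(I-\mathcal{P})X_m$ — where the paper invokes it as a black box.
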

\section{Explicit solutions to the growth-fragmentation problem with McKendrick-von
Foerster boundary conditions}\label{seces}

To illustrate the theory presented above, in this section we focus on a special case of \eqref{Tsapele} with the coefficients
\begin{equation}\label{binfrag}
r(x)=r>0,\quad a (x)=ax,\; a>0,\quad b(x,y) = \frac{2}{y}\;\;\text{and}\;\; \beta (x)=\beta _0 +\beta _1x,\; \beta_0,\beta_1\geq 0.
\end{equation}
In this case, the theory
of \cite{Banasiak2022} yields closed form solutions and the abstract calculations of Sections~\ref{secGT}--\ref{secsg}
can be made more explicit.

\subsection{Closed form solutions.} Under assumptions \eqref{binfrag}, \eqref{Tsapele} takes the form
\begin{subequations}\label{Main1}
\begin{align}
\label{Ma}
&\p_t u(x,t)=-r\p_x u(x,t)-a xu(x,t) +2a\int _x^{\infty}u(y,t)dy,\quad x,t>0,\\
\label{Mb}
&u(x,0)=u_0(x),\quad x>0,\\
\label{Mc}
&u(0,t)=\beta _0M_0(t) +\beta _1M_1(t),\quad t>0,
\end{align}
\end{subequations}
where the moments of the solution are defined by $M_k(t)=\int _0^{\infty}x^ku(x)dx$, $k\in\mathbb{N}$.
We see that $l=0$ in \eqref{n0} and $p=1$ in \eqref{aa1}, $\beta \in X_m^*$ for any $m>1$ and \eqref{lim}
is satisfied. Hence, there exists a quasi-contractive positive semigroup $(S_{K_{\beta,m}}(t))_{t\geq 0}$
solving \eqref{Main1}, such that $u(\cdot,t) = [S_{K_{\beta,m}}(t)u_0](\cdot)\in D(Z_{\beta,m})$,
whenever $u_0\in D(Z_{\beta,m})$. As a consequence, for regular input data $u_0$ the semigroup solutions
are strongly differentiable in $X_m$ and
\[
\int _0^{\infty}x^i \p_tu(x,t)dx=\dfrac{dM_i(t)}{dt}, \quad i=0,1.
\]
On the account of Lemma \ref{est10} and \eqref{Mc}, the first two moments satisfy
\begin{equation}\label{moments}
\begin{split}
M'_0(t)&=\alpha_0M_0(t) + \alpha_1M_1(t),\\
M'_1(t)&=rM_0(t),
\end{split}
\end{equation}
where for the sake of brevity, we let $\alpha _0 :=r\beta _0$ and $\alpha _1 :=r\beta _1+a$.
The eigenvalues of the matrix, appearing in the right-hand side of \eqref{moments}, are
\begin{equation}\label{lambdaeq}
\lambda _{\pm}=\dfrac{\alpha _0 \pm \sqrt{\alpha _0^2 +4r\alpha _1}}{2}
\end{equation}
and the solution are given explicitly by the formulae
\begin{equation}
\begin{aligned}
M_0(t)&=
\frac{\lambda_+e^{\lambda _{+}t}-\lambda_-e^{\lambda _{-}t}}{\lambda_+-\lambda_-} M_0(0)
+\frac{\lambda_+\lambda_-(e^{\lambda _{+}t}-e^{\lambda _{-}t})}{r(\lambda_- -\lambda_+)} M_1(0) \\
&=K_{0,0}(t) M_0(0) +K_{0,1}(t) M_1(0),\\
M_1(t)&= \frac{r(e^{\lambda_+ t} - e^{\lambda_- t})}{\lambda_+-\lambda_-}M_0(0)
+\frac{\lambda_+ e^{\lambda _{-}t}-\lambda_-e^{\lambda _{+}t}}{\lambda_+-\lambda_-}M_1(0)\\
&=K_{1,0}(t)M_0(0)+K_{1,1}(t)M_1(0).
\end{aligned}\label{mom}
\end{equation}
From the inequality $\lambda_-<0<\lambda_+$, it follows that $K_{i,j}(t)\ge 0$, $i,j=0,1$, for $t\ge 0$.
Hence, $M_i(t)$, $i=0,1$, are positive for positive regular data $u_0$ from $D(Z_{\beta,m})_+$.

\begin{figure}
\begin{center}
\includegraphics{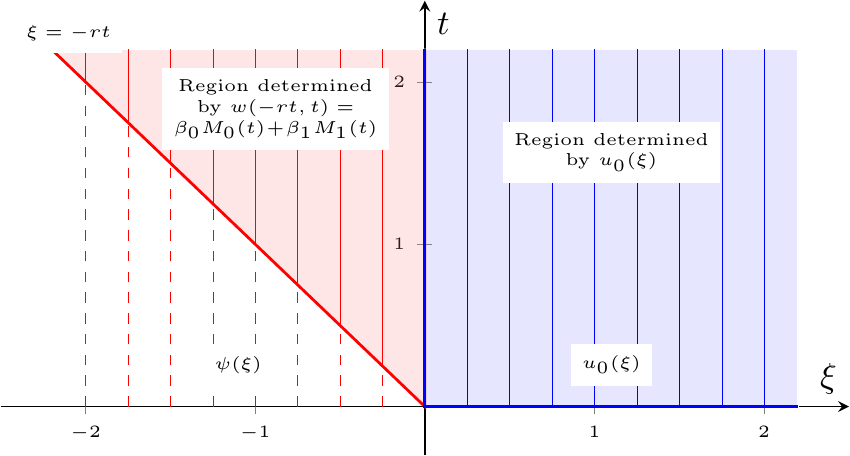}
 \caption{Geometry of the problem \eqref{treq1}--\eqref{treq2} in the characteristic coordinates $(\xi,t)$:
$u_0(\xi)\ne 0$ for $\xi>0$; $\psi(\xi)$ is to be determined for $\xi<0$ so that
$w(-rt,t) = \beta _0M_0(t) +\beta _1M_1(t)$, $t>0$.}\label{fig1}
\end{center}
\end{figure}

As mentioned earlier, \eqref{Main1} falls into the scope of the theory presented in \cite[Section 5.3]{Banasiak2022},
with the only difference being that the boundary condition \eqref{Mc} is no longer homogeneous.
To obtain closed formulas as in \cite{Banasiak2022}, we pass to the characteristic coordinates, i.e., we let
\[
x = rt+\xi,\quad v(\xi,t) = u(rt+\xi,t).
\]
Direct substitution transforms \eqref{Ma}, \eqref{Mb} to
\begin{subequations}\label{treq}
\begin{equation}\label{treq1}
\begin{split}
\p_tv(\xi,t)&=-a(rt+\xi)v(\xi,t) + 2a\int _{\xi}^{\infty}v(s,t)ds,\quad rt> \max\{0,-\xi\},\\
v(\xi ,0)&=u_0(\xi),\quad \xi>0.
\end{split}
\end{equation}
The characteristic transformation maps the entire first quadrant of the
$(x,t)$-plane onto the region $S = \bigl\{(\xi,t)\,|\, rt\ge \max\{0,-\xi\}\bigr\}$ of the $(\xi,t)$-plane,
however, the family of characteristic lines starting in the positive part of $\xi$-axis do not cover the entire
set $S$. As a consequence, in these coordinates the initial data $u_0(\xi)$
determines the solution only for $\xi>0$, see Fig.~\ref{fig1}.
To determine the solution in the sector $-rt <\xi <0$, we solve \eqref{treq1} in the whole upper $(\xi,t)$-plane.
For that, we extend the original initial data $u_0$ by letting $u_0(\xi)=0$ for $\xi < 0$ and define new
initial condition $\phi (\xi)=u_0(\xi)+\psi({\xi})$ so that $\psi (\xi) =0$ for $\xi>0$,
while the associated extended solution $v$ satisfies
\begin{equation}\label{treq2}
v(-rt, t)= \beta _0M_0(t) +\beta _1M_1(t).
\end{equation}
\end{subequations}

As in \cite{Banasiak2022}, we set $w(\xi,t):=e^{at\big(\xi +\tfrac{rt}{2}\big)}v(\xi,t)$,
so that \eqref{treq1} (extended to $\xi<0$) takes the form
\begin{equation}
\begin{aligned}
\p_t w(\xi ,t)&=2a\int _{\xi}^{\infty}e^{-at(s-\xi)}w(s,t)ds,\quad t>0, \quad \xi\in\mathbb{R},\\
w(\xi, 0)&=u_0(\xi) +\psi (\xi),\quad \xi \in \mathbb{R},
\end{aligned}
\end{equation}
where $u_0(\xi) =0$ for $\xi<0$, while $\psi (\xi)=0$ for $\xi>0$. By \cite[Section 5.3]{Banasiak2022}, we have
\begin{subequations}\label{soln}
\begin{align}
\label{solna}
w(\xi ,t)&=[(I+at\mathcal{J}^+)^2u_0](\xi),\quad \xi>0,\\
\nonumber
w(\xi ,t)&=[(I+at\mathcal{J}^+)^2u_0](\xi)+[(I+at\mathcal{J}^+)^2\psi](\xi) \\
\label{solnb}
&= [(I+at\mathcal{J}^+)^2u_0](\xi)+[(I+at\mathcal{J})^2\psi](\xi),\quad \xi<0,
\end{align}
\end{subequations}
where $I$ is the identity operator, $\mc J^+$ denotes the antiderivative
$[\mathcal{J}^+f](\xi) = \int_{\xi}^{\infty}f(s)ds$, $\xi\in \mbb R$ and
$[\mathcal{J}f](\xi):=\int _{\xi}^0f(s)ds$, $\xi<0$. From \eqref{soln}, it follows that
\begin{subequations}\label{eq71}
\begin{align}
\label{eq71a}
w(\xi ,t) &= u_0(\xi) + at\int_\xi^\infty \bigl[2 + at (s-\xi)\bigr] u_0(s) ds,\quad \xi>0,\\
\label{eq71b}
w(\xi ,t) & = at(2-at\xi)M_0(0) + a^2t^2 M_1(0) +
\psi(\xi) + at\int_\xi^0 \bigl[2 + at (s-\xi)\bigr] \psi(s) ds,\quad \xi<0.
\end{align}
\end{subequations}
On the account of \eqref{eq71b}, \eqref{treq2} and our definition of $w(\xi,t)$,
we conclude that the unknown initial data $\psi(\xi)$ satisfies
\begin{subequations}\label{dj}
\begin{equation}\label{dja}
\psi(-rt) + 2at  \int _{-rt}^0\psi (s)ds + a^2t^2 \int _{-rt}^0(s+rt)\psi (s)ds
= F(t),\quad t>0,
\end{equation}
with
\begin{equation}\label{djb}
F(t) = e^{-\frac{art^2}{2}}\bigl[\beta_0M_0(t) + \beta_1M_1(t)\bigr]
- \bigl[2at + ra^2t^3\bigr] M_0(0) - a^2t^2M_1(0),\quad t>0.
\end{equation}
\end{subequations}
As in \cite{Banasiak2022}, we let $Y(t)=e^{\frac{art^2}{2}}\int _{-rt}^0(s+rt)\psi (s)ds$.
Then \eqref{dj} takes the form
\[
Y''(t) - ar Y(t) = r^2e^{\frac{art^2}{2}} F(t), \quad Y(0)=Y^{\prime}(0)=0.
\]
The standard variation of constant formula, together with the homogeneous initial data, yields the solution
\[
Y(t) = \frac{r^2}{\sqrt{ar}}\int_0^t \sinh\bigl[\sqrt{ar}s\bigr] e^{\frac{ar(t-s)^2}{2}} F(t-s) ds,\quad t>0,
\]
which, upon backward substitution and differentiation, gives
\begin{equation}\label{eq73}
\begin{split}
\psi(\xi) &= \Bigl[\cosh\Bigl(\sqrt{\frac{a}{r}}\xi\Bigr) - 2\sqrt{\frac{a}{r}}\sinh\Bigl(\sqrt{\frac{a}{r}}\xi\Bigr)\Bigr]
 e^{-\frac{a\xi^2}{2r}} F(0)
- \frac{1}{\sqrt{ar}}\sinh\Bigl(\sqrt{\frac{a}{r}}\xi\Bigr)
 e^{-\frac{a\xi^2}{2r}} F'(0)\\
&-\sqrt{\frac{r}{a}}\int_{\xi}^0
\sinh\Bigl(\sqrt{\frac{a}{r}}s\Bigr) \frac{d^2}{d\xi^2}\Bigl[e^{\frac{as(s-2\xi)}{2r}} F\Bigl(\frac{s-\xi}{r}\Bigr)\Bigr]ds,
\quad \xi<0.
\end{split}
\end{equation}
Returning to the original variables $x$ and $t$ in \eqref{eq71} and \eqref{eq73}, we finally arrive at
the explicit formula
\begin{equation}\label{exsol}
\begin{aligned}
u(x,t)=e^{\frac{at(rt-2x)}{2}}
\begin{cases}
\displaystyle
u_0(x -rt) +at\int\limits_{x-rt}^\infty \bigl[ 2 +at(s-x+rt)\bigr] u_0(s)ds, & \text{if }  x \geq rt;\\
\displaystyle
at(2+art^2-atx)M_0(0) + a^2t^2 M_1(0) & \\
\displaystyle
\quad\quad
+\psi(x -rt) +at\int\limits_{x-rt}^0 \bigl[ 2 + at(s-x+rt)\bigr] \psi(s)ds, & \text{if }  0\leq x < rt.
\end{cases}
\end{aligned}
\end{equation}
Direct calculations show that \eqref{exsol} is indeed a $C_0$-semigroup solutions, i.e., for $u_0\in X_m$ and any
$m>1$, we have $\lim_{t\to 0^+} u(t) = u_0$ in $X_m$, $u(t)\in X_m$ and if $u(t;u(s))$ is given
by \eqref{exsol}, with initial data $u(s)$ instead of $u_0$, $u(t;u(s)) = u(t+s)$.

\subsection{Large time asymptotic.}
To begin with, we note that \eqref{exsol} agrees with the theory of Section~\ref{subsecsg}.
Indeed, writing
\[
u^-(x,t) = \chi_{[0,rt)}(x) u(x,t),\quad u^+(x,t) = \chi_{[rt,+\infty)}(x) u(x,t),
\]
it is not difficult to verify that the contribution of $u^+(x,t)$ to the large time asymptotic of the solution is negligible.
On the account of \eqref{exsol}, for $u_0\in X_m$, $m>1$,  we have
\begin{align*}
\|u^+(t)\|_m  &= e^{\frac{-art^2}{2}} \int_0^\infty (1+(x+tr)^m) e^{-ats} u_0(s)ds\\
&+ at e^{\frac{-art^2}{2}} \int_0^\infty u_0(s)ds \int_0^s (1+(x+rt)^m) \bigl[ 2 +at(s-x)\bigr] e^{-atx} dx.
\end{align*}
Partial integration, together  with the elementary inequality $(x+y)^m\le 2^m(x^m+y^m)$, $x,y\ge 0$,
$m\ge 1$, shows that the bounds
\begin{align*}
&(1+(x+rt)^m) \le c_0t^m(1+s^m),\\
&at \int_0^s (1+(x+rt)^m) \bigl[ 2 +at(s-x)\bigr] e^{-atx} dx \le c_1 t^{m+1}(1+s^m),
\end{align*}
hold uniformly for large values of $t>0$, with some $c_0,c_1>0$ that depend on $m$, $a$ and $r$ only.
Hence,
\begin{equation}\label{asyplus}
\|u^+(\cdot, t)\|_m  \le c t^{m+1}e^{\frac{-art^2}{2}} \|u_0\|_m,
\end{equation}
with some $c>0$ and the bulk asymptotics of $u(x,t)$ is governed by $u^-(x,t)$. According to \eqref{exsol}, and
in agreement with Corollary~\ref{asyexpgr}, $u^-(x,t)$ is solely determined by the first two moments $M_i(t)$, $i=0,1$.

To obtain explicit formula for the principal asymptotic term, we employ the theory of Sections~\ref{secsg}--\ref{subsecsg}.
By virtue of Theorem~\ref{thirred} and assumptions \eqref{binfrag}, $s_0:=s(K_{\beta,m})$
is a positive simple dominant
eigenvalue of $(K_{\beta,m}, D(Z_{\beta,m}))$, $m>1,$ and hence it satisfies
\begin{equation}\label{eigprob1}
s_0 v  - K_{\beta,m}v = 0,\quad v\in D(Z_{\beta,m})_+.
\end{equation}
In our setting,  the eigenvalue problem \eqref{eigprob1} is
\begin{subequations}\label{eigprob2}
\begin{align}
\label{eigprob2a}
&s_0 v (x) +rv'(x) + axv(x) - 2a\int_x^\infty v(y)dy = 0,\quad x>0,\\
\label{eigprob2b}
&v(0) = \beta_0 M_0 + \beta_1 M_1,\quad  v,v_x\in X_m,\; m>1.
\end{align}
\end{subequations}
Differentiating \eqref{eigprob2a}, we obtain the following second order linear homogeneous ODE
\begin{equation}\label{eigprob3}
rv''(x) +  (s_0 +ax) v' (x) + 3av(x) = 0.
\end{equation}
Thanks the special relation between the coefficients,  \eqref{eigprob3} is integrable. Indeed, upon the change of variables $z= (s_0+ax)$, the equation takes the form
\begin{equation}\label{eigprob3a}
\ti v''(z) +  \frac{z}{ar} \ti v' (z) + \frac{3}{ar}\ti v(z) = 0,
\end{equation}
which is of the form of \cite[Section 2.1.2.20]{PZ2003}, a particular solution of which is given by
$$
\ti v(z) = \Phi \left(\frac{3}{2},\frac{1}{2}; - \frac{z^2}{2ar}\right) = e^{-\frac{z^2}{2ar}}\Phi \left(-1,\frac{1}{2};  \frac{z^2}{2ar}\right).
$$
Here, $\Phi$ is Kummer's function, see \cite[Formula 13.1.2]{abr} (though with symbol $M$ instead of $\Phi$) and we used \cite[ Formula 13.1.27]{abr} for the transformation. Then, using \cite[Formula 13.6.17]{abr} (or by direct substitution)
$$
\Phi \left(-1,\frac{1}{2};  \frac{z^2}{2ar}\right)= - He_{2}\left(\frac{z}{\sqrt{ar}}\right) = - \frac{1}{2}H_{2}\left(\frac{z}{\sqrt{2ar}}\right) = 1-\frac{z^2}{ar},
$$
where $He_{2}$ and $H_2$ are, respectively, the probabilistic and physicist's Hermite polynomials of second order, see \cite[Chapter 22]{abr}. Taking into account that the second solution to \eqref{eigprob3a} can be found by the formula
\begin{equation}
\ti u (z) = \ti v(z)\int \frac{e^{-\frac{z^2}{2ar}}}{\ti v^2(z)} dz,
\label{secsol}
\end{equation}
which satisfies $\ti u (z) = O(z)$ as $z\to \infty$, we see that the only (up to a multiplicative constant) $X_m$-solution, $m>1$, is given by
\[
v(x) = e^{-\frac{(ax+s_0)^2}{2ar}} \left(\frac{(ax+s_0)^2}{ar}  - 1\right),\quad x>0.
\]
The solution is nonnegative if $s_0>\sqrt{ar}$ and satisfies \eqref{eigprob2b} if and only if
\begin{equation}\label{chareq}
s_0^2 - \alpha_0s_0 - r\alpha_1 = 0.
\end{equation}
The only positive root of \eqref{chareq} is given by $\lambda_+$, where $\lambda_+$ is defined in \eqref{lambdaeq}.
Since $\lambda+>\sqrt{ar}$, it follows that
\begin{equation}\label{eigpair1}
(s_0, v_0(x)) :=
\left(\lambda_+, \kappa e^{-\frac{(ax+s_0)^2}{2ar}} \left(\frac{(ax+s_0)^2}{ar}  - 1\right) \right),\quad
\kappa = \frac{a}{\lambda_+} e^{\frac{\lambda+^2}{2ar}},
\end{equation}
is indeed the simple dominant positive eigenpair of $(K_{\beta,m}, D(Z_{\beta,m}))$, for any $m>1$.
Our choice of the normalization constant $\kappa$ in \eqref{eigpair1} guarantees that $\|v_0\|_0 = 1$.

To determine the associated eigenprojector $\mathcal{P}$, we consider the eigenvalue problem for
the transpose $(K_{\beta,m}^\ast, D(K_{\beta,m}^\ast))$ of $(K_{\beta,m}, D(K_{\beta,m}))$, in
$X_m^\ast$, $m>1$. Direct calculations show that the eigenvalue problem in $X_m^\ast$ is
\begin{subequations}\label{eigprob1p}
\begin{align}
\label{eigprob1pa}
&\lambda w (x)  - rw''(x) + axw(x) - 2a\int_0^x w(y)dy - r(\beta_0+\beta_1 x)\lim_{x\to0^+} w(x) = 0,\quad x>0,\\
\label{eigprob1pb}
&\text{there exists $\lim_{x\to0^+} w(x)$},\quad w,w'\in X_m^\ast,\; m>1.
\end{align}
\end{subequations}
As before, we differentiate \eqref{eigprob1pa} with respect to $x$ to get
\begin{equation}
-rw''(x) + (\la +ax)w'(x) - a w(x) -r\beta_1 w(0) = 0.
\label{ep1}
\end{equation}
Let $$[L_\la f](x) =  -rf''(x) + (\la +ax)f'(x) - a f(x).$$
Any $f$ satisfying
$$
[L_{\la}f](x) = r\beta_1 f(0)
$$
is a solution of the equation
$$
[L_{\la}f](x) = F,
$$
where $F$ is a constant, and thus it is of the form
$$
f(x) = C_1f_1(x)+C_2f_2(x) - \frac{F}{a}
$$
where $f_1$ and $f_2$ are arbitrary solutions to $L_{\la}[f]=0$. We see that we can take $f_1(x) =(\la+ax)$ and, as in \eqref{secsol}, here we obtain
$$
f_2(x) = (\la+ax)\int_0^x \frac{e^{\frac{1}{2ar}(\la + a\xi)^2d\xi}}{(\la+a\xi)^2}d\xi.
$$
We see that $f_2$ grows faster than any polynomial as $x\to \infty$ so that, in $X_m^*$,
$$
f(x) = C_1(\la+ax) - \frac{F}{a}
$$
where $F$ and $C_1$  are related by
$$
[L_\la f](x) = F = r\beta_1 f(0) = r\beta_1 \left(C_1 \la -\frac{F}{a}\right).
$$
Hence
$$
C_1 = \frac{F\alpha_1}{a\la r\beta_1}
$$
and
$$
f(x) = \frac{F}{a\la r\beta_1} \left(\alpha_1(\la +a x)-\la r\beta_1\right) =  \frac{F}{\la r\beta_1}(\la +\alpha_1 x)
$$
Thus, the only (up to a multiplicative constant) eigenfunction of \eqref{ep1} in $X_m^*$ is given by $w(x) = \la+ \alpha_1 x$. Returning with such a $w$ to
\eqref{eigprob1pa}, we find that $w$ solves it if and only if $\lambda$ satisfies \eqref{chareq}.
Since we are looking
for positive solutions, it follows that $s_0=\lambda_+$ and
\begin{equation}\label{eigpair2}
(s_0, w_0(x)) :=
\left(\lambda_+, \sigma (\alpha_1 x+ \lambda_+) \right),\quad
\sigma = \frac{1}{\lambda_+ - \lambda_-},
\end{equation}
is the positive left eigenpair of $(K_{\beta,m}, D(Z_{\beta,m}))$, normalized so as  $\langle w_0, v_0\rangle = 1$. Hence, $\mathcal{P} u = \langle w_0, u\rangle v_0$ and, on the account
of Corollary~\ref{asyexpgr}, we have
\begin{equation}\label{asyexp1}
e^{-\lambda_+ t} u(x,t)  = \langle w_0, u_0\rangle v_0(x) + o(1)
= \left[\frac{\lambda_+}{\lambda_+-\lambda_-}M_0(0)
+\frac{\lambda_+\lambda_-}{r(\lambda_--\lambda_+)}M_1(0)\right]v_0(x) + o(1),
\end{equation}
in $X_m$, $m>1$, uniformly for large values of $t>0$.

\begin{figure}
\begin{center}
\includegraphics[width=2in]{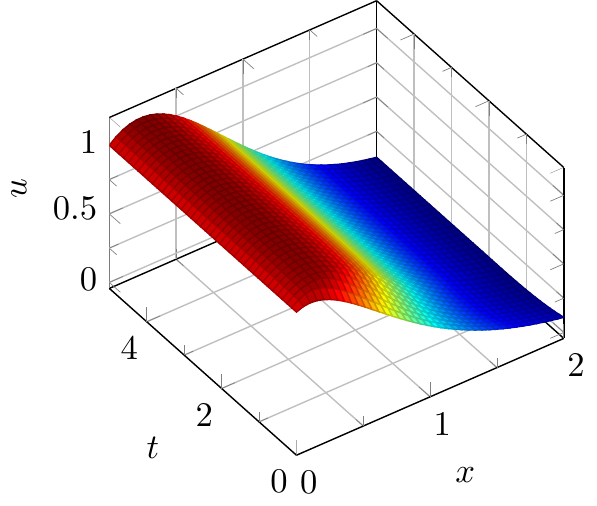} \includegraphics[width=2in]{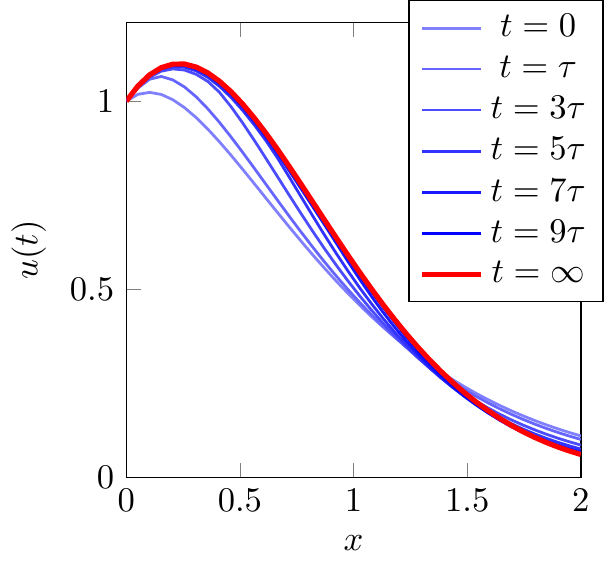} \\
\includegraphics[width=2in]{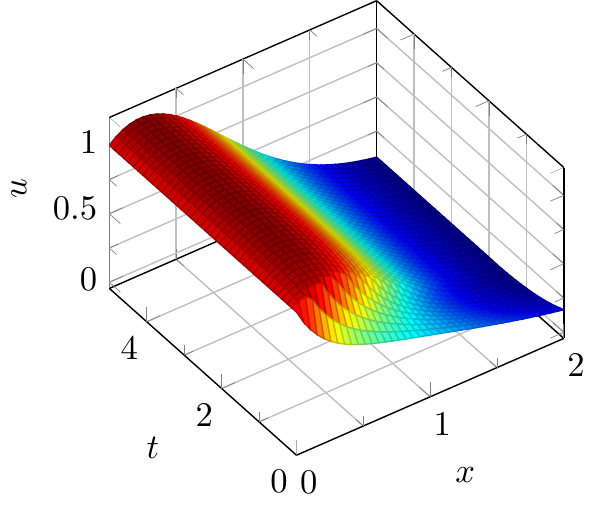} \includegraphics[width=2in]{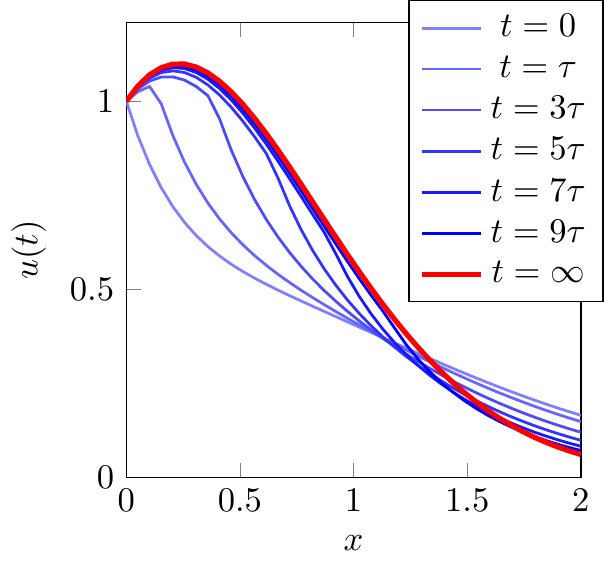} \\
 \caption{Large time asymptotics of \eqref{Main1}: normalized semigroup solutions governed
by \eqref{exsol} (left column); snapshots of $e^{-\lambda_+t}u(t)$, with $\tau = \frac{1}{8}$
(right column).
}\label{fig2}
\end{center}
\end{figure}

To illustrate the asymptotic formula, we compare dynamics governed by \eqref{exsol} and by \eqref{asyexp1}
for large values of $t>0$. For the sake of simplicity, we take
\[
a = r  =1,\quad \beta_0 = \beta_1 = \frac{1}{2},\quad u_0(x) = \Bigl(\frac{5}{2}x+1\Bigr)e^{-2x}.
\]
In these settings,
\[
\lambda_+ = \frac{3}{2},\quad \lambda_ - = -1,\quad u_0(0) = M_0(0) + M_1(0) = 1,
\]
so that $u_0 \in D(K_{\beta,m})$, $m>1$, and the semigroup solution \eqref{exsol} is regular in the spatial domain
$\mathbb{R}_+$.
The normalized exact solution $e^{-\lambda_+t}u(x,t)$ of \eqref{exsol} and its snapshots
at $t=0$ and at $t=(2i+1)\tau$, $0\le i\le 4$, with $\tau = \frac{1}{8}$, are plotted in the top-left and
the top-right diagrams of Fig.~\ref{fig2}, respectively. In agreement with \eqref{asyexp1}, after a short transition
stage the solution settles to it asymptotic profile $\langle w_0, u_0\rangle v_0(x)$, which is shown in red
in the top-right diagram of Fig.~\ref{fig2}. The asymptotic profile does not depend on the particular shape
of the initial data. To illustrate this point, we repeat our calculations, but this time for
\[
u_0(x) = \Bigl(2x^2+1\Bigr)e^{-2x},
\]
see the two bottom diagrams in Fig.~\ref{fig2}.  As postulated by the asynchronous exponential growth property, there are no qualitative changes in the large time
behavior of $e^{-\lambda_+t}u(x,t)$. As $t$ increases, the normalized solution approaches the same
limit $\langle w_0, u_0\rangle v_0(x)$ (shown in red) as in the previous example.

\section{Conclusion}\label{secconc}
In the paper, we discussed global well-posedness and large-time asymptotics of the growth-fragmentation equation
equipped with unbounded transport and fragmentation rates and with non-local McKendrick--von Foerster
boundary condition. The results of this paper build upon the well-posedness theory in $X_m, m>1,$ developed in \cite{Lamb2020} for \eqref{Tsapele} with homogeneous Dirichlet boundary conditions ($\beta =0$). The novelty of this paper lies in extending the well-posedness theory, developed in \cite{BL09} only for $m=1$, to $X_m$ with $m>1,$ and also in generalizing the long-time asymptotics results, obtained in \cite{MKJB} for $\beta =0$, to the general McKendrick--von Foerster setting. Our approach is based on the observation that in an appropriate functional setting,
the semigroup, governing solution to the non-local model, can be realized as a compact perturbation of the
semigroup associated with the same model but equipped with homogeneous Dirichlet boundary data.
This significantly simplified formal analysis and allowed us to transfer results available for the former
problem directly to our non-local settings. In particular, we demonstrated that the very recent spectral gap theory of
\cite{MKJB}, coupled with new perturbation results and complete characterization of the irreducibility of involved semigroups in Sections~\ref{secGT} and
\ref{secsg}, respectively, yields a complete description of the long time dynamics of the full model
under mild restrictions on its coefficients.

 In the last section, we found explicit solutions and the Perron eigenpair of a toy growth--fragmentation models \eqref{Main1} that, nevertheless, is considered in applications. We could see that finding large-time behaviour of solutions by direct methods is an extremely tedious exercise. In this sense, the irreducibility and the spectral gap theories
of Sections~\ref{secsg}--\ref{subsecsg} provide
a powerful tool that, in a good number of practical cases, yields a straightforward description
of the growth--fragmentation dynamics for large values of $t>0$.


\end{document}